\documentclass[reqno,11pt,a4paper,fleqn]{amsart}
\usepackage{sansmath}
\sansmath
\usepackage{etoolbox}
\patchcmd{\section}{\scshape}{\sffamily\bfseries}{}{}    
\patchcmd{\subsection}{\bfseries}{\sffamily\bfseries}{}{} 
\usepackage{amsthm}
\newtheoremstyle{sansbold}
  {\topsep}          
  {\topsep}          
  {\itshape}         
  {}                 
  {\sffamily\bfseries}
  {.}                
  {.5em}             
  {}                 
\theoremstyle{sansbold}
\newtheorem{theorem}{Theorem}[section]
\newtheorem{lemma}[theorem]{Lemma}
\newtheorem{proposition}[theorem]{Proposition}
\newtheorem{corollary}[theorem]{Corollary}
\theoremstyle{definition} 

\newtheorem{remark}[theorem]{Remark}

\usepackage[T1]{fontenc}
\usepackage{amsmath,amssymb,comment}
\usepackage[colorlinks=true,linkcolor=blue,citecolor=blue,urlcolor=blue]{hyperref}
\usepackage[hyperpageref]{backref}
\newcommand{\R}{\mathbb R}
\newcommand{\F}{\mathcal F}
\newcommand{\dd}{\,dx}
\newcommand{\ds}{\displaystyle}
\DeclareMathOperator{\diver}{div}

\DeclareMathOperator{\diam}{diam}
\DeclareMathOperator{\supp}{supp}

\newcommand{\tocless}[2]{%
  \bgroup
  \let\addcontentsline=\nocontentsline
  #1{#2}%
  \egroup
}
\newcommand{\nocontentsline}[3]{}
\numberwithin{equation}{section}
 
\title[Comparing nonlinear eigenvalues]{An optimal inequality between nonlinear eigenvalues in convex sets}
\author{Francesco Della Pietra}
\address{Dipartimento di Matematica e Applicazioni ``R. Caccioppoli'', Universit\`a degli Studi di Napoli Federico II, Via Cintia, Complesso Universitario Monte S. Angelo, 80126 Napoli, Italy}
\email{f.dellapietra@unina.it}
\subjclass[2020]{35P15, 35P30, 35J92, 49Q10}
\keywords{$p$-Laplacian, first eigenvalue, convex domains, Cheeger constant, Hersch inequality, sharp inequalities}
\date{}
\begin{document}
\begin{abstract}
Let $\lambda_p(\Omega)$ be the first Dirichlet eigenvalue of the $p$-Laplacian in a bounded convex open set $\Omega\subset\R^N$, $N\ge2$. We prove that $p\mapsto\lambda_p(\Omega)^{1/p}/\pi_p$ is strictly decreasing, where $\pi_p^p$ is the first eigenvalue on the unit interval. Consequently, for $1<q<p<\infty$, the sharp upper bound for $\lambda_p(\Omega)^{1/p}/\lambda_q(\Omega)^{1/q}$ is $\pi_p/\pi_q$. This bound is not attained and is approached by slabs, resolving the conjecture on the supremum formulated by Parini (J. Convex Anal. 2017). The proof combines log-concavity of the first eigenfunction with a vector field constructed from the one-dimensional eigenfunctions. The endpoint limits recover the Hersch--Protter and reverse Cheeger (or Buser) inequalities.
\end{abstract}
\maketitle
\begin{center}
\begin{minipage}{.7\textwidth}
\tableofcontents
\end{minipage}
\end{center}
\section{Introduction}
For a bounded open set $\Omega\subset\R^N$ and $1<p<\infty$, let
\begin{equation}\label{eq:rayleigh}
\lambda_p(\Omega)=\min_{\varphi\in W^{1,p}_0(\Omega)\setminus\{0\}}
\frac{\ds\int_\Omega|\nabla\varphi|^p\dd}{\ds\int_\Omega|\varphi|^p\dd}
\end{equation}
be the optimal constant in the Poincar\'e inequality. A nonnegative minimizer $u$ is a weak solution of
\[
-\Delta_pu=\lambda_p(\Omega)u^{p-1},\qquad u\in W^{1,p}_0(\Omega),
\]
where $\Delta_pu=\diver(|\nabla u|^{p-2}\nabla u)$; it is well known that $u\in C^{1,\alpha}(\Omega)$ \cite{Tol84}, and $u>0$ in $\Omega$ by the Harnack inequality. On intervals,
\[
\lambda_p\big(0,\ell\big)=\left(\frac{\pi_p}{\ell}\right)^p,
\]
where
\begin{equation}
\label{eq:pi}
\pi_p=2(p-1)^{\frac1p}\int_0^1\frac{1}{(1-t^p)^{\frac1p}}\,dt
=\frac{2\pi(p-1)^{\frac1p}}{p\sin(\frac\pi p)}.
\end{equation}
Let us consider the quantities
\[
\Lambda_p(\Omega)=\frac{\lambda_p(\Omega)^{1/p}}{\pi_p},
\qquad \F_{p,q}(\Omega)=\frac{\lambda_p(\Omega)^{1/p}}{\lambda_q(\Omega)^{1/q}}.
\]
Then, on the interval $(0,\ell)$, $\Lambda_p=1/\ell$ for every $p$, and $\F_{p,q}=\pi_p/\pi_q$. Moreover, 
\[
\Lambda_p(t\Omega)=t^{-1}\Lambda_{p}(\Omega),\qquad \F_{p,q}(t\Omega)=\F_{p,q}(\Omega),\qquad t>0.
\]
Our aim is to study $\F_{p,q}(\Omega)$, in the context of shape optimization, when $\Omega$ is a convex domain.
Parini \cite{Par17} conjectured  that, among planar convex sets, $\F_{p,q}$ admits a minimizer and is bounded from above, that its supremum equals the interval value $\pi_p/\pi_q$ and is not attained, and that every sequence of sets with fixed area and diverging diameter is maximizing. The statement on the supremum was also formulated in every dimension and listed among the open problems in \cite{BBP23}; the general case was still open even in dimension two. Our main result is the following.
\medskip

\begin{theorem}\label{thm:main}
Let $\Omega\subset\R^N$, $N\ge2$, be a bounded convex open set. For every $1<q<p<\infty$,
\begin{equation}\label{eq:main}
\F_{p,q}(\Omega) < \frac{\pi_{p}}{\pi_{q}},
\end{equation}
that is, $p\mapsto\Lambda_p(\Omega)$ is strictly decreasing on $(1,\infty)$.
The constant in \eqref{eq:main} is sharp, and it is asymptotically achieved by slabs: if $\Omega_\ell=(0,1)\times(-\ell/2,\ell/2)^{N-1}$, then
\[
\lim_{\ell\to\infty}\F_{p,q}(\Omega_\ell)=\frac{\pi_p}{\pi_q}.
\]
\end{theorem}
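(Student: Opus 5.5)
The plan is to prove directly that for $1<q<p<\infty$
\[
\lambda_q(\Omega)^{1/q}\,\pi_p>\pi_q\,\lambda_p(\Omega)^{1/p}.
\]
This inequality is equivalent to \eqref{eq:main}. By scaling I normalize $\lambda_p(\Omega)=\pi_p^p$, i.e.\ $\Lambda_p(\Omega)=1$, and aim to show $\lambda_q(\Omega)>\pi_q^q$. The lower bound on $\lambda_q$ will come from a Hersch/Picone-type duality. For every vector field $V\in C^1(\Omega;\R^N)$ and every $\varphi\in C^\infty_c(\Omega)$, Young's inequality gives $|\nabla \varphi|^q\ge q\,\varphi^{q-1}\nabla\varphi\cdot V'-(q-1)|V'|^{q'}\varphi^q$ (with the appropriate normalization of $V'$). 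Integrating by parts then yields
\[
\int_\Omega|\nabla\varphi|^q\dd\ \ge\ \int_\Omega\big(-\diver V-(q-1)|V|^{q'}\big)|\varphi|^q\dd ,
\]
and hence $\lambda_q(\Omega)\ge\inf_\Omega\big(-\diver V-(q-1)|V|^{q'}\big)$. On the unit interval the choice $V=-|y_q'|^{q-2}y_q'/y_q^{q-1}$, with $y_q$ the first $q$-eigenfunction, gives exactly the constant $\pi_q^q$. This is the one-dimensional object I want to transplant to $\Omega$.

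\textbf{Construction.} Let $u>0$ be the first $p$-eigenfunction of $\Omega$, normalized by $\max u=\max y_p$, where $y_p$ is the first $p$-eigenfunction of $(0,1)$, symmetric with respect to $1/2$. Define $s(x)\in(0,1/2]$ by $u(x)=y_p(s(x))$; this is legitimate because $y_p$ is monotone on $(0,1/2]$. The field I will use is
\[
V=-\frac{|y_q'(s)|^{q-2}y_q'(s)}{y_q(s)^{q-1}}\,|\nabla s|^{q-2}\nabla s .
\]
The quantity $-\diver V-(q-1)|V|^{q'}$ then splits into two parts. The first is a one-dimensional part, equal to $\pi_q^q|\nabla s|^q$ via the ODE for $y_q$. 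The second is a curvature part proportional to $-\Delta_q s$, with a sign fixed by $y_q'\ge0$ on $(0,1/2)$.

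The equation for $u$ translates into an equation for $s$. It reads $-\Delta_p s=\pi_p^p\,(1-|\nabla s|^p)\,\Phi(s)$ for an explicit function $\Phi$ built from $y_p$, once the one-dimensional identity $(p-1)|y_p'|^p+\pi_p^p y_p^p=\text{const}$ is used. Two ingredients control the curvature term and show that $|\nabla s|\le 1$. The first is the log-concavity of $u$ (Brascamp--Lieb for $p=2$, Sakaguchi for general $p$), which I take as known. The second is a $P$-function/maximum-principle argument for $(p-1)|\nabla u|^p+\pi_p^pu^p$, which gives $|\nabla s|\le1$ on convex domains. With these in hand, the aim is the pointwise inequality $-\diver V-(q-1)|V|^{q'}\ge\pi_q^q$, and it must hold with a strict inequality on a set of positive measure. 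Strictness should follow because equality would force $|\nabla s|\equiv1$ and flat level sets, i.e.\ $\Omega$ one-dimensional, which is impossible for a bounded set with $N\ge2$.

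\textbf{Main obstacle.} The hard step is this pointwise comparison. The difficulty is that $V$ involves $\Delta_q s$ while the information available concerns $\Delta_p s$ and the convexity of $-\log u$. I would first try to rewrite $\Delta_q s$ as $|\nabla s|^{q-p}\Delta_p s$ plus a term $(q-p)|\nabla s|^{q-4}\langle D^2s\nabla s,\nabla s\rangle$. The first piece is handled by the equation for $s$. For the second piece, log-concavity of $u=y_p(s)$, together with the concavity of $\log y_p$, gives a sign for $\langle D^2 s\nabla s,\nabla s\rangle$ that matches the sign of $q-p<0$. Singular points where $\nabla u=0$ and the boundary will be handled by approximation, e.g.\ by smooth strictly convex domains and the regularized operator.

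\textbf{Sharpness.} For $\Omega_\ell$, I use domain monotonicity together with a test function. On one side, $\lambda_p(\Omega_\ell)\ge\pi_p^p$, because $\Omega_\ell$ lies inside the slab $(0,1)\times\R^{N-1}$ and one can apply the one-dimensional Poincaré inequality in $x_1$. On the other side, $\lambda_p(\Omega_\ell)\le\pi_p^p+o(1)$ as $\ell\to\infty$, by testing with $y_p(x_1)\eta(x'/\ell)$ for a fixed cutoff $\eta$. The same two bounds hold with $q$ in place of $p$. Hence $\F_{p,q}(\Omega_\ell)\to\pi_p/\pi_q$, which shows the constant is sharp and, together with the strict inequality, that it is not attained.
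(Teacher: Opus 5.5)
\textbf{There is a genuine gap.} The pointwise inequality you aim for is false for your field, and the log-concavity step you propose for it has the wrong sign.

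With your convention $\lambda_q\ge\inf(-\diver V-(q-1)|V|^{q'})$, the one-dimensional field must be $+|y_q'|^{q-2}y_q'/y_q^{q-1}$; this sign slip is harmless. Your field on $\Omega$ is then $V=|\nabla w|^{q-2}\nabla w/w^{q-1}$ with $w=y_q(s)$. So $-\diver V-(q-1)|V|^{q'}=-\Delta_q w/w^{q-1}$, and your claim is that $w=y_q\circ y_p^{-1}\circ u$ is a supersolution:
\[
-y_q'(s)^{q-1}\Delta_q s\ \ge\ \pi_q^q\,y_q(s)^{q-1}\bigl(1-|\nabla s|^q\bigr).
\]
Wherever $|\nabla s|<1$, the right-hand side is of order $s^{q-1}$ near $\partial\Omega$.

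On the left, write $s_{\nu\nu}=\langle D^2s\,\nu,\nu\rangle$ with $\nu=-\nabla s/|\nabla s|$. The identity $-\Delta_q s=|\nabla s|^{q-p}(-\Delta_p s)+(p-q)|\nabla s|^{q-2}s_{\nu\nu}$ needs a \emph{lower} bound on $s_{\nu\nu}$. Log-concavity, $b_p(s)D^2s+b_p'(s)\nabla s\otimes\nabla s\le0$ with $b_p=y_p'/y_p$, gives only $H\ge0$ and an \emph{upper} bound on $s_{\nu\nu}$. The most you can extract from $H\ge0$ and the equation for $s$ is
\[
-\Delta_q s=\frac{q-1}{p-1}|\nabla s|^{q-p}(-\Delta_p s)+\frac{p-q}{p-1}(N-1)H|\nabla s|^{q-1}.
\]
Its first term is $O(s^{p-1})$, and its second can be $O(s)$ near flat parts of $\partial\Omega$.

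Concretely, take $N=p=2$, $1<q<2$, $\Omega=(0,\sqrt2)^2$ (so $\lambda_2=\pi^2=\pi_2^2$), and $u=\sin(\pi x_1/\sqrt2)\sin(\pi x_2/\sqrt2)$. On the segment $x_1=\sqrt2/2$ one finds $s=x_2/\sqrt2$, $|\nabla s|=2^{-1/2}$, $s_{\nu\nu}=0$ and $\Delta s=-\frac{\pi}{2}\tan(\pi s)$. Hence
\[
\frac{-\Delta_q w}{w^{q-1}}=\pi_q^q\,2^{-q/2}+2^{\frac{2-q}{2}}\,\frac{\pi}{2}\,\frac{y_q'(s)^{q-1}\tan(\pi s)}{y_q(s)^{q-1}}\longrightarrow\pi_q^q\,2^{-q/2}<\pi_q^q\qquad(s\to0^+).
\]
So the infimum of your quantity is well below $\pi_q^q$. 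The bound $|\nabla s|\le1$ from the $P$-function is exactly what produces the uncompensated deficit $\pi_q^q y_q^{q-1}(1-|\nabla s|^q)$. The strictness argument is therefore moot.

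\textbf{The missing idea} is to transplant the one-dimensional picture through the logarithmic gradient of $u$, not through its value. Set $z=|\nabla\log u|$ and $\nu=-\nabla u/|\nabla u|$. For $\lambda_p=\pi_p^p$, define $\Psi$ by $\Psi(b_p(x))=b_q(x)^{q-1}$, and take the field $\Psi(z)\nu$ (in the convention $\lambda_q\ge\inf(\diver V-(q-1)|V|^{q'})$).

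The equation for $\log u$ says that, along $\nu$, $z$ obeys exactly the Riccati equation of $b_p$ up to the term $-\frac{(N-1)H}{p-1}z$. Hence the one-dimensional part of $\diver V-(q-1)|V|^{q'}$ equals $\pi_q^q$ identically. No factor $|\nabla s|^q$ appears, and no $P$-function bound is needed.

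The remainder is $\frac{N-1}{p-1}Hz^p\bigl(-\Psi(z)/z^{p-1}\bigr)'$. It is nonnegative by $H\ge0$ and by the one-dimensional comparison $y_q>y_p$ on $(0,1/2)$, which yields $y_p(x)\int_x^{1/2}y_p^{p-1}<y_q(x)\int_x^{1/2}y_q^{q-1}$. It is positive where $H>0$, and integrating against the $q$-eigenfunction over sublevel sets of $u$ gives the strict inequality.

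Your sharpness argument via slabs is correct and is the same as the paper's.
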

\medskip

Since \eqref{eq:main} is strict and it becomes an equality in the limit along slabs, the supremum of $\F_{p,q}$ among bounded convex sets equals $\pi_p/\pi_q$ and it is not attained. Hence Theorem \ref{thm:main} proves, in every dimension, the conjecture on the supremum formulated by Parini; the endpoint exponents $q=1$ and $p=\infty$ are discussed below. We observe that the inequality $\sup\F_{p,q}\ge\pi_p/\pi_q$ among convex sets was already known from \cite[Lemma 2.4]{BBP23}. Moreover, as we will see in Corollary \ref{cor:plane} below, the statement of \cite[Conjecture 6.1]{Par17} on maximizing sequences in the plane, together with its converse, is true.

\medskip

From the point of view of shape optimization, $\F_{p,q}$ has been studied in several cases. The classical Cheeger, Hersch--Protter and reverse Cheeger inequalities are bounds for $\F_{p,q}$ at the endpoint exponents $q=1$ or $p=\infty$. To put Theorem~\ref{thm:main} in context, we recall some known results.


\subsection*{Generalized Cheeger inequalities}
Denote by $\rho(\Omega)$ the inradius of $\Omega$, that is, the radius of the largest ball contained in $\Omega$, and by
\[
h(\Omega)=\inf\left\{\frac{P(E)}{|E|}:E\subset\Omega,\ |E|>0\right\}
\]
the Cheeger constant, where $P(E)$ is the perimeter in $\R^N$. For Lipschitz domains, by \cite{KF03,JLM99} together with $\pi_p\to2$ at both endpoints, it holds
\begin{equation}\label{eq:limits}
\lim_{p\to1^{+}}\Lambda_p(\Omega)=\frac{h(\Omega)}2,
\qquad
\lim_{p\to\infty}\Lambda_p(\Omega)=\frac1{2\rho(\Omega)}.
\end{equation}
Accordingly, we use the conventions
\begin{equation}\label{eq:conventions}
\lambda_1(\Omega)=h(\Omega),\qquad
\lambda_\infty(\Omega)^{1/\infty}=\rho(\Omega)^{-1},
\qquad \pi_1=\pi_\infty=2.
\end{equation}
For general domains, it holds \cite{Lin93}
\begin{equation}\label{eq:lind}
\F_{p,q}(\Omega)\ge\frac qp,\qquad 1<q<p<\infty.
\end{equation}
Letting $q\to1$ one recovers the Cheeger inequality \cite{Che70,KF03}
\[
\lambda_p(\Omega)\ge\left(\frac{h(\Omega)}{p}\right)^p.
\]
Thus $\F_{p,q}$ is a generalized Cheeger ratio, and it is natural to look for its optimal bounds in a given class of sets.

\subsection*{The class of all open sets}
A systematic study of $\F_{p,q}$ for every $1\le q<p\le\infty$ was carried out by Briani, Buttazzo and Prinari \cite{BBP23}. In the class of all open sets of $\R^N$ with finite measure, they proved that the infimum of $\F_{p,q}$ is nonincreasing with respect to the dimension $N$ and converges to $q/p$ as $N\to\infty$, so that \eqref{eq:lind} is asymptotically sharp in high dimension \cite[Theorem 2.6]{BBP23}. Moreover, the supremum of $\F_{p,q}$ is finite if and only if $q>N$ \cite[Theorem 2.9]{BBP23}. For $q>N$ this follows from the lower bound $\lambda_q(\Omega)^{1/q}\ge C_{N,q}/\rho(\Omega)$ \cite[Theorem 2.7]{BBP23} (see also \cite{Pol15}), combined with $\lambda_p(\Omega)^{1/p}\le\lambda_p(B_1)^{1/p}/\rho(\Omega)$; for $q\le N$, removing from a set suitable compact sets with zero $q$-capacity and positive $p$-capacity leaves $\lambda_q$ unchanged and makes $\F_{p,q}$ arbitrarily large.

More recently, Bucur, Buttazzo and de Villeroch\'e \cite{BBdV26} proved that, among all open subsets of $\R^N$, both the infimum and the supremum of $\F_{p,q}$ are attained (the supremum being $+\infty$ when $q\le N$), by open sets which may be unbounded and which, when $N<q<p\le\infty$, can be chosen as complements of discrete sets; they also study the problem with a box constraint.

\subsection*{The class of convex sets}
In convex sets the picture is different. Two classical sharp inequalities hold: the first one is the Hersch--Protter inequality
\begin{equation}\label{eq:hersch}
\Lambda_p(\Omega)>\frac1{2\rho(\Omega)},
\end{equation}
proved by Hersch \cite{H60} for planar convex sets and by Protter \cite{Pr81} in any dimension when $p=2$ (see also \cite{dp26} for a recent sharper version), and extended to every $p$ in \cite{Kaj15,Br18,DPdBG20}. The second one is the reverse Cheeger (or Buser) inequality
\begin{equation}\label{eq:buser}
\Lambda_p(\Omega)<\frac{h(\Omega)}2,
\end{equation}
observed by Parini \cite{Par17} for $N=p=2$ as a consequence of a classical inequality of P\'olya \cite{Pol60}, and proved in \cite{Br20} for every $N$ and $p$; see also \cite{DPdBG20}, where it is derived from the P\'olya-type estimate of \cite{DPG14}. Both inequalities are asymptotically sharp on thinning slabs. As observed in the proof of \cite[Proposition 3.2]{BBP23}, they give
\[
\frac{\pi_p}{\pi_q}\,\frac1{h(\Omega)\rho(\Omega)}\le \F_{p,q}(\Omega)\le \frac{\pi_p}{\pi_q}\,h(\Omega)\rho(\Omega),\qquad 1<q<p<\infty,
\]
where $1\le h(\Omega)\rho(\Omega)\le N$. Hence $\F_{p,q}$ is bounded from above among convex sets, differently from what happens among all open sets when $q\le N$. As observed in \cite{BBP23}, \eqref{eq:buser} and \eqref{eq:hersch} also show that the conjecture on the supremum holds in the endpoint cases $q=1$ and $p=\infty$. 

In \cite{BBP23} it is also shown that, along convex sets with unit
volume and diameter tending to infinity, the limit points of $\F_{p,q}$
lie between the infimum and the supremum of $\F_{p,q}$ among convex sets
of dimension $N-1$ \cite[Proposition 3.5]{BBP23}. Together with
Blaschke's selection theorem, this suggests a dichotomy
\cite[Remark 3.9]{BBP23}: either there is a maximizing sequence of
unit-volume convex sets with diverging diameters, and then the supremum
in dimension $N$ equals the one in dimension $N-1$ (in the plane this
gives $\sup\F_{p,q}=\pi_p/\pi_q$, since $\F_{p,q}=\pi_p/\pi_q$ on
intervals); or a maximizing sequence stays, up to translations, in a
compact set, and then a convex maximizer exists. For the endpoint ratio $\lambda_2/h^2$, Cont, Leonardi and Saracco recently proved that sequences of convex bodies in $\R^3$ with vanishing relative width, and rhomboid-like sets with vanishing relative width in any dimension, are maximizing \cite[Theorems 5.1 and 5.6]{CLS26}.

By \eqref{eq:limits}, Theorem \ref{thm:main} bridges the reverse Cheeger inequality \eqref{eq:buser} and the Hersch--Protter inequality \eqref{eq:hersch}, which it contains, with strict inequalities, as limit
cases. With the conventions \eqref{eq:conventions}, its conclusions hold for $1\le q<p\le\infty$: the endpoint cases reduce to \eqref{eq:buser}, \eqref{eq:hersch} and $h(\Omega)\rho(\Omega)>1$ (Remark \ref{rem:endpoints}). In particular, in the dichotomy of \cite[Remark 3.9]{BBP23} the first alternative always occurs, and no convex maximizer exists. As we already mentioned, in the plane maximizing sequences are characterized by the divergence of the diameters (Corollary \ref{cor:plane}); in higher dimension, divergence of the diameter is not sufficient (Remark \ref{rem:cylinders}), while the sufficient conditions of \cite{CLS26} extend to every $\F_{p,q}$ (see Section \ref{sec:further}).

As regards minimization of $\F_{p,q}$, Parini \cite{Par17} proved the existence of a minimizer of $\lambda_2/h^2$ among planar convex sets, and his numerical computations suggest that the square is optimal; another proof, together with a sufficient criterion for existence in $\R^N$, was given by Ftouhi \cite{Ft21}. Convex minimizers of $\F_{p,q}$ exist whenever the infimum in dimension $N$ is strictly smaller than in dimension $N-1$, in particular in the plane when $q\le2\le p$ \cite[Theorems 3.6 and 3.8]{BBP23}; for $q=1$ their existence in every dimension was proved by Pratelli and Saracco \cite{PS25}. Among triangles, the optimality of the equilateral triangle for $\lambda_2/h^2$ has been recently proved by Endo, Liu and Mariano \cite{ELM26}, by means of a computer-assisted proof. To the best of our knowledge, apart from these results the minimization problem for $\F_{p,q}$ among convex sets remains open, and even for $\lambda_2/h^2$ the optimality of the square is still a conjecture.

\subsection*{Idea of the proof of \eqref{eq:main}}
The proof uses a lower bound for $\lambda_q$ obtained from a vector field, in the spirit of Protter and Hersch (see \cite[\S\S4--5]{H60}). If
\[
\diver V-(q-1)|V|^{q/(q-1)}\ge\mu
\]
in a suitable weak sense, integration by parts and the Young inequality imply $\lambda_q\ge\mu$. Starting from the positive $p$-eigenfunction $u$, we construct
\[
V=\Psi(|\nabla\log u|)\,\nu,\qquad \nu=-\frac{\nabla u}{|\nabla u|},
\]
where $\Psi$ is built from the one-dimensional eigenfunctions of the
$p$- and $q$-Laplacians. The divergence of $V$ equals the desired bound
plus a remainder, which is nonnegative for two reasons: the level sets
of the log-concave function $u$ are convex, and a one-dimensional
comparison between the two eigenfunctions fixes the sign of the
remaining factor. The critical set of $u$ is avoided by integrating
over $\{u<t\}$.
\subsection*{Plan of the paper}
In Sections \ref{sec:1d} and \ref{sec:eigen} we study the one-dimensional problem and recall the properties of the first eigenfunction on convex domains. Section \ref{sec:field} contains a vector field lower bound for $\lambda_q$. Inequality \eqref{eq:main} is proved in Section \ref{sec:proof}, and its sharpness, together with some consequences, in Section \ref{sec:sharp}. Section \ref{sec:further} contains some further questions.

\section{The one-dimensional problem}
\label{sec:1d}
In this section we give some properties of the first eigenfunction in the one dimensional case, which will be useful in the sequel. 
\medskip

\noindent For $r>1$, let $y_r$ be the positive first eigenfunction on $(0,1)$ normalized by $y_r(1/2)=1$. It is symmetric about $1/2$, strictly increasing on $(0,1/2)$, and satisfies
\begin{equation}\label{eq:profile}
-(|y_r'|^{r-2}y_r')'=\pi_r^r y_r^{r-1},\qquad y_r(0)=y_r(1)=0.
\end{equation}
For $x\in (0,1/2)$, it holds 
\begin{equation}\label{eq:energy}
(r-1)y_r'(x)^r+\pi_r^r y_r(x)^r=\pi_r^r.
\end{equation}
Separation of variables recovers \eqref{eq:pi} and gives
\begin{equation}\label{eq:inverse}
\dfrac{\displaystyle\int_0^{y_r(x)}(1-t^r)^{-1/r}\,dt}
{\displaystyle\int_0^1(1-t^r)^{-1/r}\,dt}=2x.
\end{equation}

\begin{lemma}\label{lem:profiles}
For $1<q<p<\infty$ and $0<x<1/2$,
\[
y_q(x)>y_p(x).
\]
\end{lemma}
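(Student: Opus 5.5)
The proof will go through the implicit representation \eqref{eq:inverse}. For $r>1$ and $s\in[0,1]$, set
\[
g_r(t)=(1-t^r)^{-1/r},\qquad F_r(s)=\frac{\int_0^s g_r(t)\,dt}{\int_0^1 g_r(t)\,dt}.
\]
Both integrals converge because $1/r<1$. The function $F_r$ is a continuous, strictly increasing bijection of $[0,1]$ onto itself, and \eqref{eq:inverse} says exactly that $F_r(y_r(x))=2x$ for $x\in(0,1/2)$.

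The plan is to prove
\[
F_p(s)>F_q(s)\qquad\text{for all } s\in(0,1).
\]
The lemma then follows. Indeed, fix $x\in(0,1/2)$, so that $y_q(x)\in(0,1)$. Then
\[
F_p(y_q(x))>F_q(y_q(x))=2x=F_p(y_p(x)),
\]
and strict monotonicity of $F_p$ gives $y_q(x)>y_p(x)$.

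\textbf{Monotone likelihood ratio.} To compare $F_p$ and $F_q$, I will first show that the ratio $g_p/g_q$ is strictly decreasing on $(0,1)$. Its logarithm is $\frac1q\log(1-t^q)-\frac1p\log(1-t^p)$, whose derivative is
\[
\frac{d}{dt}\log\frac{g_p}{g_q}(t)=\varphi_p(t)-\varphi_q(t),\qquad \varphi_r(t)=\frac{t^{r-1}}{1-t^r}=\frac{1}{t\,(t^{-r}-1)}.
\]
For fixed $t\in(0,1)$, the quantity $t^{-r}$ is strictly increasing in $r$. Hence $\varphi_r(t)$ is strictly decreasing in $r$, and since $p>q$ the derivative above is negative. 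So $g_p/g_q$ is strictly decreasing on $(0,1)$.

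\textbf{From ratio of densities to ratio of integrals.} Next consider
\[
H(s)=\frac{\int_0^s g_p\,dt}{\int_0^s g_q\,dt},\qquad s\in(0,1].
\]
I claim $H$ is strictly decreasing. Its derivative has the sign of
\[
g_p(s)\int_0^s g_q\,dt-g_q(s)\int_0^s g_p\,dt=g_q(s)\int_0^s g_q(t)\left(\frac{g_p}{g_q}(s)-\frac{g_p}{g_q}(t)\right)dt .
\]
This is negative, because $g_p/g_q(t)>g_p/g_q(s)$ for every $t<s$. Therefore $H(s)>H(1)$ for $s<1$. Rearranged, this is exactly $F_p(s)>F_q(s)$.

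The only point requiring care is the strict monotonicity of $g_p/g_q$, and the elementary rewriting of $\varphi_r$ settles it. Continuity of $H$ up to $s=1$, needed for the comparison $H(s)>H(1)$, follows from integrability of $g_r$ near $t=1$.
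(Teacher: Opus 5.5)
Your proof is correct and follows essentially the paper's route: the implicit representation $F_r(y_r(x))=2x$, strict monotonicity of the density ratio (the paper shows $g_q/g_p$ is increasing by combining the log-derivatives into $(t^{q-1}-t^{p-1})/((1-t^q)(1-t^p))$), the comparison $F_q<F_p$ on $(0,1)$, and inversion. The only difference is cosmetic: instead of differentiating your ratio $H$, the paper compares $g_q$ with $\frac{g_q(s)}{g_p(s)}\,g_p$ separately on $(0,s)$ and on $(s,1)$, and divides the two resulting integral inequalities to obtain $F_q(s)<F_p(s)$.
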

\begin{proof}
Write 
\[
f_r(t):=\frac{1}{(1-t^r)^{\frac1r}},\;\; c_r=\int_0^1f_r(t)\,dt,\;\; \text{and}\;\;
\Theta_r(s)=\frac{1}{c_r}\int_0^s f_r(t)\,dt.
\]
 Hence \eqref{eq:inverse} can be written as
\begin{equation}
\label{eq:inverse2}
y_{r}(x)=\Theta_{r}^{-1}(2x),\quad x\in (0,1/2).
\end{equation}
 The ratio $g=f_q/f_p$ is strictly increasing, since as $0<t<1$ and $q<p$,
\[
(\log g)'(t)=\frac{t^{q-1}}{1-t^q}-\frac{t^{p-1}}{1-t^p}
=\frac{t^{q-1}-t^{p-1}}{(1-t^q)(1-t^p)}>0.
\]
Fix $0<s<1$. Since $g$ is strictly increasing, $f_q<g(s)f_p$ on $(0,s)$ and
$f_q>g(s)f_p$ on $(s,1)$. Integrating over $(0,s)$ and $(s,1)$ respectively,
\[
c_q\Theta_q(s)<g(s)\,c_p\Theta_p(s),
\qquad
c_q\bigl(1-\Theta_q(s)\bigr)>g(s)\,c_p\bigl(1-\Theta_p(s)\bigr).
\]
By dividing, $\Theta_q(s)<\Theta_p(s)$, and then by \eqref{eq:inverse2},  $y_q(x)>y_p(x)$. Since the profiles take values in $(0,1)$ on $(0,1/2)$,
$y_q^{q-1}>y_p^{q-1}\ge y_p^{p-1}$ there. 
\end{proof}

Now let us set
\[
b_r(x):=\frac{y_r'(x)}{y_r(x)},\qquad 0<x<1/2.
\]
It is a smooth decreasing bijection from $(0,1/2)$ onto $(0,\infty)$. More precisely, by substituting $y_r'=b_ry_r$ in \eqref{eq:profile} 
\begin{equation}\label{eq:slope}
-(r-1)b_r^{r-2}b_r'=\pi_r^r+(r-1)b_r^r,
\qquad b_r(0)=\infty,\quad b_r(1/2)=0.
\end{equation}
Integrating \eqref{eq:profile} from $x$ to $1/2$ gives
\[
y_r'(x)^{r-1}=\pi_r^r\int_x^{1/2}y_r(t)^{r-1}\,dt.
\]
Combining this formula with \eqref{eq:energy}, we obtain the useful identity
\begin{equation}\label{eq:gamma}
-\frac{d}{dx}\log b_r(x)^{r-1}
=\frac{\pi_r^r}{b_r(x)^{r-1}}+(r-1)b_r(x)
=\frac1{\Gamma_r(x)}
\end{equation}
where
\[
\Gamma_r(x):=y_r(x)\int_x^{1/2}y_r(t)^{r-1}\,dt.
\]
\smallskip
\begin{lemma}\label{lem:transfer}
Let $1<q<p<\infty$ and $\lambda>0$. Set
\[
a=\frac{\lambda^{1/p}}{\pi_p},\qquad \mu=(a\pi_q)^q.
\]
Define $\Psi:(0,\infty)\to(0,\infty)$ by
\begin{equation*}\label{eq:transfer}
\Psi(a b_p(x))=(a b_q(x))^{q-1},\qquad 0<x<1/2.
\end{equation*}
Then $\Psi$ is smooth and satisfies
\begin{align}
&\frac{z^{2-p}(\lambda+(p-1)z^p)}{p-1}\Psi'(z)
-(q-1)\Psi(z)^{q/(q-1)}=\mu,\label{eq:ode}\\
&\left(\frac{\Psi(z)}{z^{p-1}}\right)'<0.\label{eq:sign}
\end{align}
\end{lemma}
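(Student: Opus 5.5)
The plan is to work entirely in the parameter $x\in(0,1/2)$, using the substitution $z=a\,b_p(x)$. First, smoothness and well-definedness. By \eqref{eq:slope}, $b_p'=-\bigl(\pi_p^p+(p-1)b_p^p\bigr)/\bigl((p-1)b_p^{p-2}\bigr)<0$ on $(0,1/2)$. Hence $x\mapsto a b_p(x)$ is a smooth decreasing bijection of $(0,1/2)$ onto $(0,\infty)$ with nonvanishing derivative, and its inverse is smooth. So $\Psi(z)=\bigl(a\,b_q(x(z))\bigr)^{q-1}$ is a well-defined, smooth, positive function on $(0,\infty)$.

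For \eqref{eq:ode}, I would differentiate the defining relation in $x$:
\[
a\,b_p'\,\Psi'(z)=(q-1)a^{q-1}b_q^{q-2}b_q'.
\]
Next, rewrite the coefficient using $\lambda=(a\pi_p)^p$ and $z=ab_p$:
\[
z^{2-p}\bigl(\lambda+(p-1)z^p\bigr)=a^2b_p^{2-p}\bigl(\pi_p^p+(p-1)b_p^p\bigr)=-(p-1)a^2b_p',
\]
where the last equality is \eqref{eq:slope} for $r=p$. The first term of \eqref{eq:ode} is then $-a^2b_p'\Psi'(z)=-(q-1)a^qb_q^{q-2}b_q'$. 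By \eqref{eq:slope} with $r=q$, this equals $a^q\bigl(\pi_q^q+(q-1)b_q^q\bigr)$. Since $(q-1)\Psi^{q/(q-1)}=(q-1)a^qb_q^q$, subtracting leaves exactly $a^q\pi_q^q=\mu$.

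For \eqref{eq:sign}, note that
\[
\frac{\Psi(z)}{z^{p-1}}=a^{q-p}\,\frac{b_q(x)^{q-1}}{b_p(x)^{p-1}}.
\]
Because $z$ is a decreasing function of $x$, it suffices to show that this ratio is strictly increasing in $x$. By \eqref{eq:gamma}, that is equivalent to
\[
-\frac1{\Gamma_q(x)}+\frac1{\Gamma_p(x)}>0,\qquad\text{i.e.}\qquad \Gamma_q(x)>\Gamma_p(x)\quad\text{on }(0,1/2).
\]
This is the only non-computational step. It follows from Lemma~\ref{lem:profiles}: on $(0,1/2)$ we have $y_q>y_p$, and, as noted at the end of its proof, $y_q^{q-1}>y_p^{q-1}\ge y_p^{p-1}$ pointwise on $(0,1/2)$, because the profiles take values in $(0,1)$. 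Integrating the second inequality over $(x,1/2)$ and multiplying by the first, with all factors positive, gives $\Gamma_q>\Gamma_p$ strictly. The main point to check carefully is that the monotonicity direction flips correctly under the decreasing change of variable $x\mapsto z$; otherwise everything reduces to the one-dimensional identities \eqref{eq:slope} and \eqref{eq:gamma}.
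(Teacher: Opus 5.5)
Your proposal is correct and follows essentially the same route as the paper: \eqref{eq:ode} comes from differentiating along the parametrization $z=ab_p(x)$ and using \eqref{eq:slope} for $r=p$ and $r=q$. Likewise, \eqref{eq:sign} comes from \eqref{eq:gamma} together with $\Gamma_q>\Gamma_p$ from Lemma~\ref{lem:profiles}, where you spell out the comparison $\Gamma_q>\Gamma_p$ that the paper only cites.
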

\begin{proof}
The definition of $\Psi$ is unambiguous because $b_p$ is a smooth bijection with nonzero derivative. Smoothness and positivity follow from the corresponding properties of $b_p$ and $b_q$.

Put $z(x)=ab_p(x)$ and $W(x)=(ab_q(x))^{q-1}$. Equation \eqref{eq:slope} gives
\[
z'(x)=-\frac1a\frac{z^{2-p}(\lambda+(p-1)z^p)}{p-1},
\quad
W'(x)=-\frac1a\big(\mu+(q-1)W^{q/(q-1)}\big).
\]
Since $W=\Psi(z)$, division of these identities proves \eqref{eq:ode}.

To prove \eqref{eq:sign}, differentiate the logarithm of the ratio along $z(x)$. By \eqref{eq:gamma},
\begin{equation*}\label{eq:keysign}
\frac{d}{dx}\log\frac{\Psi(ab_p(x))}{(ab_p(x))^{p-1}}
=\frac1{\Gamma_p(x)}-\frac1{\Gamma_q(x)}>0,
\end{equation*}
where Lemma \ref{lem:profiles} and the definition of $\Gamma_{r}$ give the last inequality. Since $z'(x)<0$, the ratio is strictly decreasing as a function of $z$.
\end{proof}

\section{The eigenfunction and its critical set}
\label{sec:eigen}
We recall some properties of the eigenfunctions on a convex domain. A positive first eigenfunction $u$ belongs to $C^{1,\alpha}(\Omega)$ \cite{Tol84}. On compact subsets of $\Omega\setminus\{\nabla u=0\}$ the equation is uniformly elliptic, so $u$ is smooth there and the equation holds pointwise. On a bounded convex domain, boundary regularity gives $u\in C(\overline\Omega)$ with $u=0$ on $\partial\Omega$.

\begin{lemma}\label{lem:concavity}
Let $\Omega$ be a bounded convex open set and let $u>0$ be its first $p$-eigenfunction. Then $\log u$ is concave in $\Omega$.
\end{lemma}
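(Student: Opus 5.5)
This is a classical result, and in the paper I would prove it mainly by reduction to the literature rather than from scratch. Log-concavity of the first Dirichlet eigenfunction on convex domains is due to Brascamp--Lieb for $p=2$. For general $1<p<\infty$ it was obtained by Sakaguchi via a concavity maximum principle. I would therefore cite Sakaguchi's theorem and add a sketch of the mechanism, to make clear which properties of $u$ the rest of the paper relies on.

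The sketch follows the Korevaar--Kennington concavity maximum principle. First, approximate $\Omega$ from inside by smooth, strictly convex domains $\Omega_k\uparrow\Omega$. Approximate the degenerate operator by the regularized one $\operatorname{div}\big((\varepsilon^2+|\nabla u|^2)^{(p-2)/2}\nabla u\big)$, so that the eigenfunctions are classical. Second, set $v=-\log u$. It satisfies a quasilinear equation of the form
\[
\operatorname{div}(|\nabla v|^{p-2}\nabla v)=(p-1)|\nabla v|^p+\lambda_p,
\]
and $v\to+\infty$ at $\partial\Omega$. Third, apply the concavity function method to $v$. Consider
\[
\mathcal C(x,y,t)=v\big((1-t)x+ty\big)-(1-t)v(x)-tv(y)
\]
and show that it cannot have a positive interior maximum. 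At such a maximum, the first-order conditions force $\nabla v$ to coincide at the three points. The structure of the right-hand side, namely $(p-1)|\xi|^p+\lambda_p$, which is convex and does not depend on $v$, then gives a contradiction with the second-order conditions. The blow-up of $v$ at the boundary, together with strict convexity of $\Omega_k$, excludes a maximum approaching the boundary. This yields convexity of $v$ on each $\Omega_k$. Finally, pass to the limit: eigenfunctions converge locally uniformly as $\varepsilon\to0$ and $k\to\infty$, by $C^{1,\alpha}$ estimates and simplicity of the first eigenvalue, and concavity of $\log u$ is preserved under locally uniform limits.

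The main obstacle is the boundary behaviour in the concavity maximum principle. One needs $\mathcal C$ to be negative near $\{x\in\partial\Omega_k\}\cup\{y\in\partial\Omega_k\}$. Kennington's argument does this using the blow-up rate $|\nabla v|\to\infty$, that is, Hopf's lemma on smooth strictly convex domains. The degeneracy of the $p$-Laplacian at critical points is handled by the $\varepsilon$-regularization. Since all of this is standard, in the paper I would state the lemma with the citation to Sakaguchi and only remark that $\{u>t\}$ are convex sets. That consequence is exactly what is used later: the level sets are convex, so the mean curvature term in $\operatorname{div}\nu$ has a sign.
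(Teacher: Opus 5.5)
Your load-bearing argument is the paper's: cite Sakaguchi for smooth convex domains, approximate $\Omega$ from inside by smooth convex domains, and pass to the limit in $u_k((1-t)x+ty)\ge u_k(x)^{1-t}u_k(y)^t$.

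\textbf{The approximation step.} The paper quotes Sakaguchi's theorem only for smooth convex domains, so the approximation is a genuine step of the proof. It cannot be dropped as your last paragraph suggests, unless you cite a version valid for arbitrary convex domains, which the paper also points to. Moreover, $C^{1,\alpha}$ estimates plus simplicity are not enough to carry it out. You must normalize, e.g.\ $\|u_k\|_{L^p}=1$, to get a nontrivial limit. You must also prove $\lambda_p(\Omega_k)\to\lambda_p(\Omega)$: domain monotonicity gives one inequality, and a fixed test function in $C^\infty_c(\Omega)$ plus density gives the other. Only then is every limit a minimizer of \eqref{eq:rayleigh} in $\Omega$, and only then does simplicity identify it with $u$. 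The paper writes this out.

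\textbf{The concavity maximum principle sketch.} This should not be kept as written, because its key step fails. At a positive interior maximum of $\mathcal C$ the gradients coincide: $\nabla v=\xi$ at $x$, $y$ and $z=(1-t)x+ty$. So the coefficient matrix $A=A(\xi)$ is the same at the three points. Take the second-order conditions along $(\alpha e_k,\beta e_k)$, with $(e_k)$ an eigenbasis of $A$, and sum them weighted by the eigenvalues of $A$. This gives
\[
\big((1-t)\alpha+t\beta\big)^2\,A:D^2v(z)\le(1-t)\alpha^2\,A:D^2v(x)+t\beta^2\,A:D^2v(y).
\]
Since $A:D^2v=(p-1)|\xi|^p+\lambda_p$ at all three points, this reduces to $((1-t)\alpha+t\beta)^2\le(1-t)\alpha^2+t\beta^2$, which always holds. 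A right-hand side independent of $v$ is exactly the borderline case where no contradiction arises. Its convexity in $\xi$ plays no role, since $\xi$ is common to the three points. Strictness must come from an extra device, such as a perturbation or an approximating problem whose right-hand side is strictly increasing in $v$.

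There is a second problem. Regularizing the operator in the $u$-variable destroys the $(p-1)$-homogeneity. The regularized problem is no longer invariant under $u\mapsto cu$. Also, $-\log u_\varepsilon$ solves an equation whose principal coefficients depend on $v$, not only on $\nabla v$. These coefficients therefore differ at $x$, $y$, $z$, and the three-point argument breaks down.

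Since your proof of the lemma rests on the citation, these issues do not invalidate the reduction. But the sketch misdescribes how the result is proved.
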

\begin{proof}
For convex domains with smooth boundary this is Sakaguchi's theorem \cite{Sak87}; see also \cite[Theorem 1.6]{CH26}, where general bounded convex domains are considered. For the reader's convenience, we sketch the extension, since no smoothness of $\partial\Omega$ is assumed here.

Choose smooth convex domains $D_j\Subset\Omega$ such that every compact subset of $\Omega$ is contained in $D_j$ for all sufficiently large $j$. Let $u_j$ be their positive first eigenfunctions with $\|u_j\|_{L^p(D_{j})}=1$, extended by zero to $\Omega$. Domain monotonicity gives $\lambda_p(D_{j})\ge\lambda_p(\Omega)$. Testing with any fixed compactly supported smooth function in $\Omega$, and using density in $W^{1,p}_0(\Omega)$, gives  $\limsup_{j} \lambda_{p} (D_{j}) \le \lambda_{p}(\Omega)$. Thus
\[
\lambda_p(D_{j})\longrightarrow\lambda_p(\Omega).
\]
The zero extensions are bounded in $W^{1,p}_0(\Omega)$. Weak compactness, compactness in $L^p$, and lower semicontinuity show that every subsequential limit is a normalized nonnegative minimizer in \eqref{eq:rayleigh}. Positivity and simplicity \cite{Lin90} identify it with the normalized $u$. Equality of the limiting energies also gives strong convergence in $W^{1,p}_0(\Omega)$.

Interior bounds for the eigenvalue equation and local H\"older estimates give local uniform convergence, after passage to a subsequence, on each compact subset of $\Omega$. The bounds are uniform because the eigenvalues and the $L^p$ norms are bounded and the compact subsets eventually lie in all $D_{j}$. Sakaguchi's theorem implies
\[
u_j((1-t)x+ty)\ge u_j(x)^{1-t}u_j(y)^t
\]
for $x,y\in D_{j}$ and $0<t<1$. Passing to the limit for fixed $x,y\in\Omega$ proves the assertion.
\end{proof}

Now set $v=\log u$ and $K=\{x\in\Omega:\nabla u(x)=0\}$. In $\Omega\setminus K$ we write
\begin{equation*}\label{eq:notation}
z=|\nabla v|=\frac{|\nabla u|}{u},\qquad
\nu=-\frac{\nabla u}{|\nabla u|}=-\frac{\nabla v}{|\nabla v|}.
\end{equation*}
For $0<t<\max u$ let $\Sigma_t=\{u=t\}$. At a point $x\in\Omega\setminus K$, $\nu$ is the unit normal to $\Sigma_{u(x)}$ pointing outside $\{u>u(x)\}$.
We define $H$ by
\begin{equation}\label{eq:H}
(N-1)H=\diver\nu=-\frac{\Delta u-\langle D^2u\,\nu,\nu\rangle}{|\nabla u|}
\qquad\text{in }\Omega\setminus K.
\end{equation}
Thus $H(x)$ is the mean curvature of $\Sigma_{u(x)}$ at $x$.

 It is well known that, for $w=u$ and $w=v$,
\begin{equation}\label{eq:plapnu}
\Delta_p w=|w_\nu|^{p-2}\big[(p-1)w_{\nu\nu}+(N-1)H\,w_\nu\big]\qquad\text{in }\Omega\setminus K,
\end{equation}
where $w_\nu=\nabla w\cdot\nu=-|\nabla w|$ and $w_{\nu\nu}=\langle D^2w\,\nu,\nu\rangle$.

The following lemma collects the consequences of log-concavity used
below: the critical set of $u$ is negligible, the level sets below the maximum are smooth and convex, and their mean curvature is nonnegative and not identically zero.

\medskip

\begin{lemma}\label{lem:geometry}
Let $\Omega$ be a bounded convex open set and $u$ a positive first $p$-eigenfunction. Then:
\begin{enumerate}
\item $K=\{u=\max u\}$ is a compact convex subset of $\Omega$ with empty interior, and $|K|=0$;
\item for each $0<t<\max u$, the set $\Sigma_t$ is a compact smooth hypersurface bounding the convex set $\{u>t\}$;
\item $H\ge0$ in $\Omega\setminus K$, and $H>0$ on some nonempty open subset of $\Omega\setminus K$.
\end{enumerate}
\end{lemma}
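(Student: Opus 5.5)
The plan is to derive everything from the concavity of $v=\log u$ (Lemma \ref{lem:concavity}), together with $u\in C^{1,\alpha}(\Omega)\cap C(\overline\Omega)$, $u=0$ on $\partial\Omega$, and the interior smoothness of $u$ away from $K$.

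For (1), I start from the fact that $v$ is concave and $C^1$ in $\Omega$. For such a function a critical point is a global maximum, and $\nabla v=\nabla u/u$ vanishes exactly where $\nabla u$ does. Hence $K=\{\nabla u=0\}=\{u=\max u\}$. This set is a superlevel set of a concave function, so it is convex. It is closed in $\overline\Omega$, because $u$ is continuous there and vanishes on $\partial\Omega$ while $\max u>0$; it is therefore a compact subset of $\Omega$. Next, suppose $K$ had nonempty interior $U$. On $U$ the function $u$ would be constant, so for $\varphi\in C^\infty_c(U)$ with $\varphi\ge0$, $\varphi\not\equiv0$, the weak equation would give
\[
0=\int_\Omega|\nabla u|^{p-2}\nabla u\cdot\nabla\varphi\dd=\lambda_p(\Omega)\int_\Omega u^{p-1}\varphi\dd>0,
\]
which is a contradiction. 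A convex set with empty interior lies in an affine hyperplane, so $|K|=0$.

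For (2), fix $0<t<\max u$. Then $\Sigma_t\subset\Omega\setminus K$, and $\Sigma_t$ is compact because $u\in C(\overline\Omega)$ vanishes on $\partial\Omega$. Near $\Sigma_t$ the function $u$ is smooth with $\nabla u\ne0$, so the implicit function theorem makes $\Sigma_t$ a smooth compact hypersurface. The set $\{u>t\}=\{v>\log t\}$ is convex as a superlevel set of a concave function. It remains to check that $\partial\{u>t\}=\Sigma_t$. The inclusion $\subseteq$ follows from continuity. Conversely, at a point of $\Sigma_t$ we have $\nabla u\ne0$, so $u$ takes values both above and below $t$ arbitrarily close to that point.

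For (3), I use \eqref{eq:H} written with $v$ in place of $u$; this is legitimate since $\nu$ is the same for both. Thus $(N-1)H=-\bigl(\Delta v-\langle D^2v\,\nu,\nu\rangle\bigr)/|\nabla v|$. The numerator is the trace of $D^2v$ restricted to $\nu^\perp$, which is $\le0$ because $D^2v\le0$ by concavity, and hence $H\ge0$. For strict positivity, take $0<t<\max u$ and a point $x_0\in\Sigma_t$ farthest from a fixed origin. At $x_0$ the hypersurface $\Sigma_t$ lies inside a sphere tangent to it there, so all principal curvatures at $x_0$ are at least the reciprocal of that radius, giving $H(x_0)>0$. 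Since $H$ is continuous on $\Omega\setminus K$, we get $H>0$ on an open neighborhood of $x_0$.

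The only delicate point I expect is making sure the sign conventions match: the curvature of $\Sigma_t$ must be computed with respect to $\nu$, the normal pointing outside $\{u>t\}$. At the farthest point the ball $\{u>t\}$ lies inside the sphere, and with this outward normal the resulting mean curvature is positive, consistent with the convention in \eqref{eq:H}.
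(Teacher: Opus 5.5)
Your proof is correct. Parts (1) and (2) follow the paper's argument: critical points of the concave $C^1$ function $v$ are its maximum points, and a test function supported in the interior of $K$ gives a contradiction. Compactness, smoothness and convexity of $\Sigma_t$ then come from the boundary values, the implicit function theorem and log-concavity. You also verify $\partial\{u>t\}=\Sigma_t$, which the paper leaves implicit.

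The genuine difference is in (3). For $H\ge0$, the paper invokes the fact that a smooth convex boundary has nonnegative mean curvature. You instead read it off directly from $D^2v\le0$ restricted to $\nu^\perp$; this is the same fact, made explicit.

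For strict positivity the two arguments really diverge. The paper argues globally: the divergence theorem applied to $\nu$ on $A=\{s<u<t\}$ gives $(N-1)\int_A H\,dx=P(\{u>s\})-P(\{u>t\})$. The Steiner formula and monotonicity of perimeter under inclusion of convex sets make the right-hand side positive, so $H\not\equiv0$ on $A$. You argue pointwise: at a point $x_0\in\Sigma_t$ of maximal distance $R$ from a fixed origin, the sphere of radius $R$ touches $\Sigma_t$ from outside. Hence $H(x_0)\ge1/R$, and continuity of $H$ on $\Omega\setminus K$ gives an open set where $H>0$.

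Your route is more elementary, since it needs no perimeter theory or Steiner formula. It also gives slightly more: every level set $\Sigma_t$ contains a point of positive mean curvature, and choosing the origin in $\Omega$ gives the quantitative bound $H(x_0)\ge 1/\diam\Omega$. The paper's identity instead ties the integral of $H$ over an annulus to the perimeter drop between level sets. That global information is plausibly closer to what one would need to relate the remainder in \eqref{eq:deficit} to geometry, as asked in Section~\ref{sec:further}.

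You were right to check that $\nu(x_0)=x_0/|x_0|$, because $\{u>t\}$ lies in the closed ball. This orientation also follows from $H\ge0$, since the opposite orientation would force $H(x_0)\le-1/R$.
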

\begin{proof}
Let $u$ be a first positive eigenfunction.

\smallskip\noindent\textit{(1).} The critical points of the differentiable concave function $v=\log u$ (Lemma \ref{lem:concavity}) are exactly its maximum points. Since $\nabla u=u\nabla v$, we have $K=\{u=\max u\}$. This is convex and compactly contained in $\Omega$, by continuity and the zero boundary values. It cannot contain a ball: testing $-\Delta_pu=\lambda_pu^{p-1}$ with a nonzero nonnegative smooth function supported in that ball would give zero on the left and a positive value on the right. A convex set with empty interior lies in a proper affine subspace, so it has zero Lebesgue measure.

\smallskip\noindent\textit{(2).} Every level below the maximum avoids $K$. Its smoothness follows from local elliptic regularity and the
implicit function theorem; compactness and convexity follow from the boundary values and log-concavity.
In particular, $\Sigma_t$ bounds the smooth convex set $\{u>t\}$.

\smallskip\noindent\textit{(3).} Since $\Sigma_t$ is the boundary of the smooth convex set $\{u>t\}$, its mean curvature $H$ is nonnegative.

For strictness, fix $0<s<t<\max u$ and set $A=\{s<u<t\}$. Its closure is a compact subset of
$\Omega\setminus K$, and $\partial A=\Sigma_s\cup\Sigma_t$, with outer unit normal $\nu$ on $\Sigma_s$
and $-\nu$ on $\Sigma_t$. By \eqref{eq:H} and the divergence theorem,
\begin{equation*}
(N-1)\int_A H\dd=\int_A\diver\nu\dd 
=P(\{u>s\})-P(\{u>t\}).
\end{equation*}
The sets $\{u>t\}$ and $\{u>s\}$ are convex, and $\overline{\{u>t\}}\subset\{u>s\}$; hence
$\{u>t\}+B_\varepsilon\subset\{u>s\}$ for some $\varepsilon>0$. By the monotonicity of the perimeter
under inclusion of convex sets and the Steiner formula \cite{Sch14},
\[
P(\{u>s\})\ge P(\{u>t\}+B_\varepsilon)>P(\{u>t\}).
\]
Therefore $H$, which is continuous and nonnegative in $\Omega\setminus K$, does not vanish identically
in $A$, and it is positive on a nonempty open subset of $A$.
\end{proof}

\section{A vector field lower bound}
\label{sec:field}
The following version of the classical vector field argument retains the remainder and permits a nonsmooth critical set, which is removed by integrating over sublevel sets of $u$. For $q=2$ it reduces to the lower bound $\lambda_2\ge\inf_{\Omega}(\diver V-|V|^2)$, which, as remarked in \cite[\S5]{H60}, was essentially known to Protter in 1958, and which Hersch turned into the maximum principle $\lambda_2(\Omega)=\max_V\inf_\Omega(\diver V-|V|^2)$, starting from a method of Payne and Weinberger (see \cite[\S4,\S5]{H60} and the references therein contained). For general $q$ it is closely related to the Picone identity \cite{AH98}.

In Section~\ref{sec:proof}, Lemma~\ref{lem:field} below will be applied with
$\eta=u$, the first $p$-eigenfunction, so that $K$ is its critical set.

\medskip

\begin{lemma}\label{lem:field}
Let $\Omega$ be a bounded open set, $q>1$, and let $\eta\in C^1(\Omega)$, with
$M=\sup_\Omega \eta$ and $K=\{\eta=M\}$ (possibly empty). Assume that $|K|=0$ and that
$\nabla \eta\neq0$ in $\Omega\setminus K$. Let $V\in C^1(\Omega\setminus K;\R^N)$ satisfy
\[
V\cdot\nabla \eta\le0\quad\text{and}\quad
\diver V-(q-1)|V|^{q'}=\mu+\mathcal R\quad\text{in }\Omega\setminus K,
\qquad \mathcal R\ge0,
\]
where $q'=q/(q-1)$. Then, since $|K|=0$, $\mathcal R$ is defined a.e. in $\Omega$, and
every $w\in W^{1,q}_0(\Omega)$ satisfies
\begin{equation}\label{eq:fieldbound}
\int_\Omega|\nabla w|^q\dd\ge\mu\int_\Omega|w|^q\dd
+\int_{\Omega}\mathcal R|w|^q\dd.
\end{equation}
In particular, $\lambda_q(\Omega)\ge\mu$.
\end{lemma}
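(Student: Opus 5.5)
The proof rests on the pointwise identity obtained from integration by parts combined with Young's inequality, applied on sublevel sets of $\eta$ so that the field is never evaluated near $K$.

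\textbf{Step 1: reduction to smooth compactly supported functions.} By density I first take $w\in C^\infty_c(\Omega)$. This extends to general $w\in W^{1,q}_0(\Omega)$ at the end: the left side and the $\mu$ term of \eqref{eq:fieldbound} converge under $W^{1,q}$ convergence. For the $\mathcal R$ term I pass to an a.e.\ convergent subsequence and apply Fatou's lemma, which is legitimate because $\mathcal R\ge0$.

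\textbf{Step 2: truncation at a sublevel set.} For $t<M$ let $\Omega_t=\{\eta<t\}$. The closure of $\operatorname{supp} w\cap\Omega_t$ is compact and avoids $K$, since $\eta\le t<M$ there. Hence $V$ is $C^1$ on a neighborhood of that set. I would take $t$ to be a regular value of $\eta$; this is automatic, since $\nabla\eta\ne0$ off $K$. Then $\Omega_t\cap\operatorname{supp}w$ has a $C^1$ boundary portion $\{\eta=t\}$, and its outer normal there is $\nabla\eta/|\nabla\eta|$.

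\textbf{Step 3: the divergence theorem.} This gives
\[
\int_{\Omega_t}|w|^q\diver V\dd=\int_{\{\eta=t\}}|w|^q\,V\cdot\frac{\nabla\eta}{|\nabla\eta|}\,d\mathcal H^{N-1}-q\int_{\Omega_t}|w|^{q-2}w\,\nabla w\cdot V\dd .
\]
The boundary term is $\le0$ by the hypothesis $V\cdot\nabla\eta\le0$. This sign condition is exactly why the truncation is harmless.

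\textbf{Step 4: Young's inequality.} I use
\[
q|w|^{q-1}|\nabla w||V|\le|\nabla w|^q+(q-1)|w|^q|V|^{q'}.
\]
Substituting $\diver V=\mu+\mathcal R+(q-1)|V|^{q'}$, the $|V|^{q'}$ terms cancel and leave
\[
\int_{\Omega_t}(\mu+\mathcal R)|w|^q\dd\le\int_{\Omega_t}|\nabla w|^q\dd\le\int_\Omega|\nabla w|^q\dd .
\]

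\textbf{Step 5: removing the truncation.} Let $t\uparrow M$. The sets $\Omega_t$ increase to $\Omega\setminus K$, which has full measure because $|K|=0$. Monotone convergence on the nonnegative $\mathcal R$ term and dominated convergence on the $\mu$ term give \eqref{eq:fieldbound}. If $\mu<0$ the $\mu$ term still converges by dominated convergence. If $K=\emptyset$ the argument still works: either take $t\uparrow M$ as before, or, when $\eta$ attains no maximum, note directly that $\operatorname{supp}w\subset\Omega_t$ for $t$ close to $M$. Finally, dropping $\mathcal R\ge0$ and taking the infimum over $w$ gives $\lambda_q(\Omega)\ge\mu$.

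\textbf{Main obstacle.} The delicate point is Step 3: applying the divergence theorem on $\Omega_t\cap\operatorname{supp}w$ while keeping the boundary term with the correct sign. This needs $\{\eta=t\}$ to be $C^1$ near $\operatorname{supp}w$, which follows from the implicit function theorem because $\eta\in C^1$ and $\nabla\eta\ne0$ there. If that were problematic, I would instead use a smooth cutoff $\phi_\varepsilon(\eta)$ with $\phi_\varepsilon$ nonincreasing and vanishing near $M$. In that version the extra term $\int|w|^q\phi_\varepsilon'(\eta)\,V\cdot\nabla\eta\dd$ is $\ge0$ and can be discarded in the same way, which avoids any regularity of level sets.
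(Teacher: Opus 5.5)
Your proposal is correct and follows the paper's proof essentially step by step. Both arguments truncate to the sublevel set $\{\eta<t\}$ with $t<M$, apply the divergence theorem with a boundary term that is nonpositive because $V\cdot\nabla\eta\le0$, and absorb the gradient term by Young's inequality. Both then let $t\uparrow M$ using $|K|=0$ and monotone convergence, and pass from $C_c^\infty$ to $W^{1,q}_0$ via an a.e.\ convergent subsequence and Fatou's lemma (your backup cutoff $\phi_\varepsilon(\eta)$ variant would also work).
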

\begin{proof}
First take $w\in C_0^\infty(\Omega)$ and $t<M$, and let $\Sigma_t=\{\eta=t\}$. Since $\nabla \eta\neq0$
on $\Sigma_t$, the open set $\Omega'_{t}=\{\eta<t\}$ has, inside $\Omega$, the $C^1$ boundary $\Sigma_t$, with outer
unit normal $\nabla \eta/|\nabla \eta|$. The field $|w|^qV$ is $C^1$ in a neighbourhood of
$\overline{\Omega'_t}\cap\supp w$, which is a compact subset of $\Omega\setminus K$. The divergence
theorem and $V\cdot\nabla \eta\le0$ give
\[
\int_{\Omega'_{t}}\diver\big(|w|^qV\big)\dd=\int_{\Sigma_t}|w|^q\,V\cdot\frac{\nabla \eta}{|\nabla \eta|}\,d\mathcal H^{N-1}\le0 .
\]
Hence, by Young's inequality,
\begin{align*}
\int_{\Omega'_{t}}|w|^q\diver V\dd
&\le-q\int_{\Omega'_{t}}|w|^{q-2}w\,\nabla w\cdot V\dd\\
&\le\int_{\Omega'_{t}}|\nabla w|^q\dd+(q-1)\int_{\Omega'_{t}}|w|^q|V|^{q'}\dd,
\end{align*}
that is,
\[
\mu\int_{\Omega'_{t}}|w|^q\dd+\int_{\Omega'_{t}}\mathcal R|w|^q\dd\le\int_\Omega|\nabla w|^q\dd .
\]
As $t\uparrow M$, the sets $\Omega'_t$ increase to $\Omega'_{M}=\Omega\setminus K$. Since $\mathcal R\ge0$
and $|K|=0$, monotone convergence proves \eqref{eq:fieldbound} for smooth $w$.

Finally, approximate any $w\in W^{1,q}_0(\Omega)$ strongly by $w_j\in C_0^\infty(\Omega)$, choosing a subsequence converging almost everywhere. Fatou's lemma for the remainder and convergence of the other two integrals prove \eqref{eq:fieldbound}.
\end{proof}

\section{Proof of the main inequality}
\label{sec:proof}
Now we are in position to prove \eqref{eq:main}.

\medskip

\begin{proposition}\label{prop:mainin}
Let $\Omega\subset\R^N$, $N\ge2$, be a bounded convex open set and let $1<q<p<\infty$. Then
\[
\lambda_q(\Omega)>\left(\frac{\pi_q}{\pi_p}\right)^q\lambda_p(\Omega)^{q/p},
\]
that is, \eqref{eq:main} holds.
\end{proposition}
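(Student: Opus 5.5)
We apply Lemma \ref{lem:field} with $\eta=u$, the positive first $p$-eigenfunction of $\Omega$, normalized arbitrarily. Set $\lambda=\lambda_p(\Omega)$, $a=\lambda^{1/p}/\pi_p$ and $\mu=(a\pi_q)^q=(\pi_q/\pi_p)^q\lambda^{q/p}$, and let $\Psi$ be the function of Lemma \ref{lem:transfer}. Lemma \ref{lem:geometry} gives the hypotheses of Lemma \ref{lem:field}: $K=\{u=\max u\}$ has measure zero and $\nabla u\ne0$ off $K$. In $\Omega\setminus K$ we define
\[
V=\Psi(z)\,\nu,\qquad z=|\nabla v|,\quad v=\log u,\quad \nu=-\frac{\nabla u}{|\nabla u|}.
\]
This field is $C^1$ there, since $u$ is smooth off $K$ and $z>0$. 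It also satisfies $V\cdot\nabla u=-\Psi(z)|\nabla u|\le0$, as Lemma \ref{lem:field} requires.

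The core step is to compute $\diver V$. We have $\diver V=\Psi'(z)\,\partial_\nu z+\Psi(z)(N-1)H$. Since $\nabla v=-z\nu$, the directional derivative is $\partial_\nu z=-v_{\nu\nu}$.

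To express $v_{\nu\nu}$, we use the equation for $v$. From $-\Delta_p u=\lambda u^{p-1}$ one gets $-\Delta_p v=\lambda+(p-1)|\nabla v|^p$. Formula \eqref{eq:plapnu} with $w=v$ and $v_\nu=-z$ then gives
\[
z^{p-2}\big[(p-1)v_{\nu\nu}-(N-1)Hz\big]=-\lambda-(p-1)z^p,
\]
hence
\[
-v_{\nu\nu}=\frac{z^{2-p}(\lambda+(p-1)z^p)}{p-1}-\frac{(N-1)Hz}{p-1}.
\]
Substituting this and using \eqref{eq:ode} yields
\[
\diver V-(q-1)|V|^{q'}=\mu+(N-1)H\Big(\Psi(z)-\frac{z\Psi'(z)}{p-1}\Big).
\]
The bracket equals $-\frac{z^{p}}{p-1}\big(\Psi/z^{p-1}\big)'$, which is positive by \eqref{eq:sign}. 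Together with $H\ge0$ from Lemma \ref{lem:geometry}(3), this gives
\[
\mathcal R:=(N-1)H\Big(\Psi(z)-\frac{z\Psi'(z)}{p-1}\Big)\ge0.
\]
Lemma \ref{lem:field} then yields $\lambda_q\ge\mu$. This is the non-strict form of the claim.

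For strictness, suppose $\lambda_q(\Omega)=\mu$ and take $w$ to be the first $q$-eigenfunction, which is positive in $\Omega$. Then \eqref{eq:fieldbound} forces $\int_\Omega\mathcal R\,w^q\dd=0$. But by Lemma \ref{lem:geometry}(3), $H>0$ on a nonempty open set, and the bracket is strictly positive everywhere. So $\mathcal R>0$ on an open set where $w>0$, a contradiction.

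The main obstacle is the computation of $\diver V$, in particular getting the signs in $\partial_\nu z$ and in \eqref{eq:plapnu} right so that the $H$-term combines into exactly the factor whose sign is controlled by \eqref{eq:sign}. The regularity and integrability at $K$ are already handled by Lemma \ref{lem:field}. Near $\partial\Omega$ the growth of $\Psi(z)$ as $z\to\infty$ is harmless, since the lemma only uses test functions in $C_0^\infty(\Omega)$ and then passes to the limit by density.
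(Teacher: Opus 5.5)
Your proposal is correct and follows the paper's proof essentially step by step. It uses the same field $V=\Psi(z)\nu$, the same derivation of $z_\nu$ from the equation for $\log u$ via \eqref{eq:plapnu}, and the same remainder $\mathcal R$, whose sign is controlled by \eqref{eq:sign} and Lemma~\ref{lem:geometry}; the strictness argument is also the same, except that you phrase it by contradiction while the paper reads it off directly as $\lambda_q-\mu\ge\int_\Omega\mathcal R\,w^q\dd>0$.
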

\begin{proof}
We divide the proof into three steps.

\medskip

\noindent\textbf{Step 1: the equation for $v=\log u$.} 
We start from a positive Dirichlet eigenfunction $u$ of $\lambda_{p}=\lambda_{p}(\Omega)$. Since $u\in C^1(\Omega)$ and $u>0$, we have $v\in C^1(\Omega)$ and
$|\nabla v|^{p-2}\nabla v=u^{1-p}|\nabla u|^{p-2}\nabla u$. Testing the equation of $u$ with $\varphi u^{1-p}$, $\varphi\in C^\infty_c(\Omega)$, which is admissible since $u$ is bounded away from zero on $\supp\varphi$, we get
\begin{equation}
\label{eq:logpde}
-\Delta_pv=(p-1)|\nabla v|^p+\lambda_{p}
\end{equation}
weakly in $\Omega$. Moreover, since $u$ is smooth in $\Omega\setminus K$, \eqref{eq:logpde} holds pointwise there.

\medskip
With the notation of Section~\ref{sec:eigen}, we have $v_\nu=-z$ and $v_{\nu\nu}=-z_\nu$, since
$\nabla z=D^2v\,\nabla v/|\nabla v|=-D^2v\,\nu$. Inserting \eqref{eq:plapnu} with $w=v$ in
\eqref{eq:logpde}, we obtain
\begin{equation}\label{eq:split}
z_\nu=\frac{z^{2-p}\big(\lambda_p+(p-1)z^p\big)}{p-1}-\frac{(N-1)H}{p-1}\,z
\qquad\text{in }\Omega\setminus K.
\end{equation}

\medskip
\noindent\textbf{Step 2: the vector field.} Let $\Psi$ be the function given by Lemma \ref{lem:transfer}
with $\lambda=\lambda_{p}=\lambda_p(\Omega)$, so that $\mu=(\pi_q/\pi_p)^q\lambda_p^{q/p}$, and set
\begin{equation*}\label{eq:V}
V=\Psi(z)\,\nu\qquad\text{in }\Omega\setminus K.
\end{equation*}
The field $V$ is smooth in $\Omega\setminus K$, $|V|=\Psi(z)$ and, since $\nu=-\nabla u/|\nabla u|$,
\begin{equation}\label{eq:outward}
V\cdot\nabla u=-\Psi(z)\,|\nabla u|<0\qquad\text{in }\Omega\setminus K,
\end{equation}
that is, $V$ points outside the superlevel sets of $u$.

\medskip
Now, for every $f\in C^1(0,\infty)$,
\[
\diver\big(f(z)\,\nu\big)=f'(z)\,z_\nu+(N-1)H\,f(z)\qquad\text{in }\Omega\setminus K.
\]
Choosing $f=\Psi$ and using \eqref{eq:split}, we get
\[
\diver V=\frac{z^{2-p}\big(\lambda_p+(p-1)z^p\big)}{p-1}\,\Psi'(z)
+(N-1)H\left(\Psi(z)-\frac{z\,\Psi'(z)}{p-1}\right).
\]

\medskip
Since $|V|^{q'}=\Psi(z)^{q'}$,
\begin{multline*}
\diver V-(q-1)|V|^{q'}
=\left[\frac{z^{2-p}(\lambda_p+(p-1)z^p)}{p-1}\,\Psi'(z)-(q-1)\Psi(z)^{q'}\right] \\
+(N-1)H\left(\Psi(z)-\frac{z\,\Psi'(z)}{p-1}\right),
\end{multline*}
and the term in square brackets is equal to $\mu$, by \eqref{eq:ode}. Moreover,
\[
\Psi(z)-\frac{z\,\Psi'(z)}{p-1}=-\frac{z^p}{p-1}\left(\frac{\Psi(z)}{z^{p-1}}\right)'.
\]
Hence
\begin{equation}\label{eq:remainder}
\diver V-(q-1)|V|^{q'}=\mu+\mathcal R,\qquad
\mathcal R=\frac{N-1}{p-1}\,H\,z^p\frac{\partial}{\partial z}\left[-\frac{\Psi(z)}{z^{p-1}}\right],
\end{equation}
where $\mu=(\pi_q/\pi_p)^q\lambda_{p}^{q/p}$.

\medskip
The remainder $\mathcal R$ is the product of three factors:
\begin{itemize}
\item $(N-1)z^p/(p-1)>0$ in $\Omega\setminus K$;
\item $-\big(\Psi(z)/z^{p-1}\big)'>0$, by \eqref{eq:sign};
\item $H\ge0$ in $\Omega\setminus K$, and $H>0$ on a nonempty open set $U\subset\Omega\setminus K$ (Lemma \ref{lem:geometry}).
\end{itemize}
Therefore $\mathcal R\ge0$ in $\Omega\setminus K$, and $\mathcal R>0$ in $U$.

\noindent\textbf{Step 3: conclusion.} By Lemma \ref{lem:geometry}, $K=\{u=\max u\}$, $|K|=0$, and $u$ is
smooth with $\nabla u\neq0$ in $\Omega\setminus K$. 
Together with \eqref{eq:outward} and \eqref{eq:remainder}, this shows that Lemma~\ref{lem:field} applies with $\eta=u$ and the field $V$.

 Let $w$ be the positive first
$q$-eigenfunction with $\int_\Omega w^q\dd=1$, so that $\int_\Omega|\nabla w|^q\dd=\lambda_q(\Omega)$. Then \eqref{eq:fieldbound} gives
\begin{equation}\label{eq:deficit}
\lambda_q(\Omega)-\left(\frac{\pi_q}{\pi_p}\right)^q\lambda_p(\Omega)^{q/p}
\ge\int_{\Omega}\mathcal R\,w^q\dd>0,
\end{equation}
where the last inequality holds because $w>0$ in $\Omega$ and $\mathcal R>0$ in the open set $U$. This concludes the proof of \eqref{eq:main}.
\end{proof}

\section{Sharpness and consequences}\label{sec:sharp}

\subsection*{Slabs and the endpoint inequalities}

The next proposition is the second ingredient in the proof of Theorem~\ref{thm:main}, which we complete below.

\begin{proposition}\label{prop:slabs}
Let $\Omega_\ell=(0,1)\times(-\ell/2,\ell/2)^{N-1}$. For every $1<r<\infty$,
\[
\lambda_r(\Omega_\ell)\longrightarrow\pi_r^r\quad(\ell\to\infty).
\]
Moreover, $\rho(\Omega_\ell)=1/2$ for $\ell\ge1$, and $h(\Omega_\ell)\to2$.
\end{proposition}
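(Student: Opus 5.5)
The plan is to squeeze $\lambda_r(\Omega_\ell)$ between two bounds that both tend to $\pi_r^r$. For the lower bound, I will use domain monotonicity together with the one-dimensional Poincar\'e inequality in the first variable. For a.e.\ fixed $x'\in(-\ell/2,\ell/2)^{N-1}$, the slice $x_1\mapsto\varphi(x_1,x')$ lies in $W^{1,r}_0(0,1)$, so
\[
\int_0^1|\partial_1\varphi|^r\,dx_1\ge\pi_r^r\int_0^1|\varphi|^r\,dx_1 .
\]
Integrating in $x'$ and using $|\nabla\varphi|\ge|\partial_1\varphi|$ gives $\lambda_r(\Omega_\ell)\ge\pi_r^r$ for every $\ell$. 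This step is routine.

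For the upper bound, I will test \eqref{eq:rayleigh} with
\[
\varphi(x)=y_r(x_1)\,\chi_\ell(x'),
\]
where $\chi_\ell$ is a product cutoff equal to $1$ on $(-\ell/2+1,\ell/2-1)^{N-1}$, vanishing at the boundary, and with $|\nabla\chi_\ell|\le 1$. Here $y_r$ is the profile from \eqref{eq:profile}. Then $\nabla\varphi=(y_r'\chi_\ell,\,y_r\nabla\chi_\ell)$. I will use the elementary inequality
\[
(a^2+b^2)^{r/2}\le(1+\varepsilon)a^r+C_\varepsilon b^r,
\]
or simply $(a^2+b^2)^{r/2}\le 2^{(r-2)^+/2}(a^r+b^r)$ combined with a separate treatment of the transition region. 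The cleanest route is to bound the numerator by
\[
\int_0^1|y_r'|^r\int\chi_\ell^r \;+\; C\,|\{\nabla\chi_\ell\neq0\}|\cdot\sup(|y_r'|+y_r)^r .
\]
The region where $\nabla\chi_\ell\ne0$ has $(N-1)$-dimensional measure $O(\ell^{N-2})$. The denominator is at least $\int_0^1 y_r^r\,(\ell-2)^{N-1}$. The ratio is therefore at most
\[
\pi_r^r\frac{\int\chi_\ell^r}{(\ell-2)^{N-1}}+O(1/\ell),
\]
which tends to $\pi_r^r$ since $\int\chi_\ell^r\le\ell^{N-1}$. The one point needing care is that $y_r'$ is bounded. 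This holds because \eqref{eq:energy} gives $|y_r'|\le\pi_r(r-1)^{-1/r}$.

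For the remaining claims, the largest ball in $\Omega_\ell$ with $\ell\ge1$ is limited by the width $1$ in the $x_1$ direction and fits otherwise, so $\rho=1/2$. For the Cheeger constant, the lower bound $h(\Omega_\ell)\ge2$ follows either from $h\ge 1/\rho$ for convex sets (\eqref{eq:buser}–\eqref{eq:hersch} imply $h\rho>1$) or directly by projecting onto the $x'$ hyperplane: any $E\subset\Omega_\ell$ has $P(E)\ge 2\mathcal H^{N-1}(\pi(E))\ge 2|E|$ because the width is $1$. For the upper bound I take $E=\Omega_\ell$ itself, which gives
\[
\frac{P}{|E|}=\frac{2\ell^{N-1}+2(N-1)\ell^{N-2}}{\ell^{N-1}}\to2 .
\]
The main obstacle, mild as it is, is the cutoff estimate in the upper bound for $\lambda_r$. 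I expect it to go through as described, since boundary layers are of lower order in $\ell$.
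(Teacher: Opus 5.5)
Your proof is correct. The lower bound $\lambda_r(\Omega_\ell)\ge\pi_r^r$ by slicing, the inradius, and the estimate $h(\Omega_\ell)\le 2+2(N-1)/\ell$ from $E=\Omega_\ell$ all match the paper. The differences are in two places.

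\emph{Upper bound on $\lambda_r$.} The paper tests with $y_r(x_1)\phi(x'/\ell)$ for a fixed $\phi\in C_0^\infty(Q')$. After rescaling $x'$, the transverse gradient carries a factor $\ell^{-1}$, and dominated convergence gives the limit. This uses only $y_r'\in L^r(0,1)$ and involves no boundary layer. Your plateau cutoff instead needs the bound $|y_r'|\le\pi_r(r-1)^{-1/r}$ from \eqref{eq:energy} and a count of the transition region. In exchange it gives the explicit rate $\lambda_r(\Omega_\ell)\le\pi_r^r+O(1/\ell)$. A product of one-dimensional cutoffs has $|\nabla\chi_\ell|\le\sqrt{N-1}$ rather than $1$, which is harmless.

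\emph{Lower bound $h(\Omega_\ell)\ge2$.} The paper reuses $\Lambda_r(\Omega_\ell)\ge1$ and lets $r\downarrow1$ in \eqref{eq:limits}, so it rests on the Kawohl--Fridman limit. Your first option, $h\rho>1$ from \eqref{eq:hersch}--\eqref{eq:buser} or from \eqref{eq:endpoints}, is legitimate. It is not circular, because the proposition is used only for sharpness, but it is heavy machinery for a slab. Your second option is elementary and avoids any limit in $p$, with two provisos:
\begin{itemize}
\item $\pi(E)$ must be read as the essential projection $\{x':\mathcal L^1(E_{x'})>0\}$, since adding a null set to $E$ can enlarge the literal projection without changing $P(E)$;
\item you need to invoke the slicing theorem for sets of finite perimeter.
\end{itemize}
A shorter route is the calibration $X=(1-2x_1)e_1$. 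It satisfies $|X|\le1$ on $\overline{\Omega_\ell}$ and $\diver X=-2$, so the Gauss--Green formula gives $2|E|\le P(E)$ for every finite-perimeter $E\subset\Omega_\ell$.
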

These facts are well known. The convergence of the eigenvalues follows by applying $N-1$ times the two-sided estimate in \cite[Lemma 2.4]{BBP23}; see also \cite{Br18,Br20} for the eigenvalue of the infinite slab $(0,1)\times\R^{N-1}$. Moreover, all the assertions can be deduced, in the more general anisotropic setting, from \cite[Lemma 3.1 and Theorem 5.8]{DPdBG20}. We include a short proof, since its arguments are used again in Section \ref{sec:further}.
\begin{proof}
Apply the one-dimensional Poincar\'e inequality on each segment parallel to the first coordinate axis. Since $|\nabla\varphi|\ge|\partial_1\varphi|$, integration gives $\lambda_r(\Omega_\ell)\ge\pi_r^r$.

For the opposite inequality, fix $0\ne\phi\in C_0^\infty(Q')$, where $Q'=(-1/2,1/2)^{N-1}$, and test with $\varphi_\ell(x)=y_r(x_1)\phi(x'/\ell)$. Changing variables gives the quotient
\[
\frac{\displaystyle\int_0^1\int_{Q'}
\left(y_r'^2\phi^2+\ell^{-2}y_r^2|\nabla\phi|^2\right)^{r/2}\,dx'\,dx_1}
{\displaystyle\int_0^1y_r^r\,dx_1\int_{Q'}|\phi|^r\,dx'}.
\]
Passing to the limit $\ell\to +\infty$, dominated convergence sends it to $\int_0^1|y_r'|^r/\int_0^1y_r^r=\pi_r^r$.

For $\ell\ge1$ the inradius is obviously $\rho(\Omega_\ell)=1/2$. By the first part of the proof, $\lambda_r(\Omega_\ell)\ge\pi_r^r$, that is $\Lambda_r(\Omega_\ell)\ge1$, for every $r>1$; letting $r\downarrow1$ in \eqref{eq:limits} gives $h(\Omega_\ell)\ge2$. Taking $E=\Omega_\ell$ in the definition of $h$, we conclude that
\[
2\le h(\Omega_\ell)\le\frac{P(\Omega_\ell)}{|\Omega_\ell|}
=2+\frac{2(N-1)}\ell.
\]
\end{proof}
We can finally conclude the proof of the main result
\begin{proof}[Proof of Theorem \ref{thm:main}]
Let $1<q<p<\infty$. Inequality \eqref{eq:main} is proved in
Section~\ref{sec:proof}, Proposition \ref{prop:mainin}. Finally, each
$\Omega_\ell$ is a bounded convex open set, and
Proposition~\ref{prop:slabs} gives $\lambda_r(\Omega_\ell)^{1/r}\to\pi_r$
for $r=p,q$. Hence $\F_{p,q}(\Omega_\ell)\to\pi_p/\pi_q$, and the constant
in \eqref{eq:main} cannot be lowered.
\end{proof}

\begin{remark}\label{rem:endpoints}
With the conventions \eqref{eq:conventions}, Theorem \ref{thm:main} extends to $1\le q<p\le\infty$. Indeed, let $\Omega$ be a bounded convex open set. By the strict monotonicity of $p\mapsto\Lambda_p(\Omega)$ and by \eqref{eq:limits}, for $1<s<r<t<\infty$ we have
\[
\frac{h(\Omega)}2\ge\Lambda_s(\Omega)>\Lambda_r(\Omega)>\Lambda_t(\Omega)\ge\frac1{2\rho(\Omega)},
\]
hence
\begin{equation}\label{eq:endpoints}
\frac1{2\rho(\Omega)}<\Lambda_r(\Omega)<\frac{h(\Omega)}2\qquad\text{for every }1<r<\infty,
\end{equation}
and in particular $h(\Omega)\rho(\Omega)>1$. Since the limits \eqref{eq:limits} do not rely on \eqref{eq:hersch} and \eqref{eq:buser}, this gives a new proof of the Hersch--Protter and reverse Cheeger inequalities. By Proposition \ref{prop:slabs}, all these inequalities are sharp.
\end{remark}

\subsection*{Other comparisons}
Since an inner ball is an admissible competitor for the Cheeger problem,
$h(\Omega)\rho(\Omega)\le N$. Combining \eqref{eq:lind}, \eqref{eq:endpoints} and Theorem \ref{thm:main} gives
\[
   \frac{\pi_p}{\pi_q\,h(\Omega)\rho(\Omega)}<\F_{p,q}(\Omega)<\frac{\pi_p}{\pi_q},
   \qquad \F_{p,q}(\Omega)\ge\frac qp .\]
The first inequality was already observed, in a non-strict form, in the proof of \cite[Proposition 3.2]{BBP23}.
Theorem \ref{thm:main} compares $\lambda_p^{1/p}$ and $\lambda_q^{1/q}$, which have the same scaling. The eigenvalues themselves scale differently, $\lambda_r(t\Omega)=t^{-r}\lambda_r(\Omega)$, so their order depends on the size of $\Omega$. The next corollary gives a criterion for $\lambda_p(\Omega)<\lambda_q(\Omega)$ in terms of the Cheeger constant.

\begin{corollary}\label{cor:large}
Let $\Omega\subset\R^N$ be a bounded convex open set, and let $1<q<p<\infty$. If
\begin{equation}\label{eq:hcond}
h(\Omega)\le2\left(\frac{\pi_q^q}{\pi_p^p}\right)^{1/(p-q)},
\end{equation}
then $\lambda_p(\Omega)<\lambda_q(\Omega)$. In particular, this holds whenever
\[
\rho(\Omega)\ge\frac N2\left(\frac{\pi_p^p}{\pi_q^q}\right)^{1/(p-q)}.
\]
\end{corollary}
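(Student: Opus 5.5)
We combine the main inequality with the reverse Cheeger bound \eqref{eq:endpoints}. By Theorem \ref{thm:main} and \eqref{eq:endpoints}, for $1<q<p<\infty$ we have
\[
\Lambda_p(\Omega)<\Lambda_q(\Omega)<\frac{h(\Omega)}{2}.
\]
Write $\lambda_p=\pi_p^p\Lambda_p^p$ and $\lambda_q=\pi_q^q\Lambda_q^q$. The goal $\lambda_p<\lambda_q$ is then equivalent to
\[
\pi_p^p\Lambda_p^p<\pi_q^q\Lambda_q^q .
\]

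First, Theorem \ref{thm:main} gives $\Lambda_p<\Lambda_q$, hence $\Lambda_p^p<\Lambda_q^{p}$, and it suffices to show
\[
\pi_p^p\Lambda_q^{p}\le\pi_q^q\Lambda_q^q,\qquad\text{i.e.}\qquad \Lambda_q^{p-q}\le\frac{\pi_q^q}{\pi_p^p}.
\]
The strictness needed for the conclusion comes from this first step.

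Second, by \eqref{eq:endpoints}, $\Lambda_q<h(\Omega)/2$. Under \eqref{eq:hcond},
\[
\frac{h(\Omega)}{2}\le\left(\frac{\pi_q^q}{\pi_p^p}\right)^{1/(p-q)},
\]
so $\Lambda_q^{p-q}<\pi_q^q/\pi_p^p$, since $p-q>0$. Chaining the two steps gives
\[
\lambda_p=\pi_p^p\Lambda_p^p<\pi_p^p\Lambda_q^p<\pi_q^q\Lambda_q^q=\lambda_q .
\]

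For the ``in particular'' statement, we use the fact recalled just before the corollary: since an inscribed ball is an admissible Cheeger competitor, $h(\Omega)\le N/\rho(\Omega)$. If
\[
\rho(\Omega)\ge\frac N2\left(\frac{\pi_p^p}{\pi_q^q}\right)^{1/(p-q)},
\]
then
\[
h(\Omega)\le\frac{N}{\rho(\Omega)}\le 2\left(\frac{\pi_q^q}{\pi_p^p}\right)^{1/(p-q)},
\]
which is \eqref{eq:hcond}.

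No step is a real obstacle; the only care needed is tracking the direction of exponents. Everything relies on $\Lambda_q>0$ and $p-q>0$ when raising to powers, and on using the strict inequalities from \eqref{eq:endpoints} and Theorem \ref{thm:main}.
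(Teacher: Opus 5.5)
Your proof is correct and matches the paper's argument. You combine $\Lambda_p<\Lambda_q$ (Theorem~\ref{thm:main}) with $\Lambda_q<h(\Omega)/2$ from \eqref{eq:endpoints}, which is the paper's chain $\lambda_p<(\pi_p/\pi_q)^p\lambda_q^{p/q}\le\lambda_q\,\frac{\pi_p^p}{\pi_q^q}\bigl(\frac{h(\Omega)}{2}\bigr)^{p-q}\le\lambda_q$ written in terms of $\Lambda$, and you derive the inradius criterion from $h(\Omega)\rho(\Omega)\le N$ exactly as the paper does.
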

\begin{proof}
By \eqref{eq:main}, \eqref{eq:endpoints} and \eqref{eq:hcond}, we have
\begin{multline*}
\lambda_p(\Omega)<\left(\frac{\pi_p}{\pi_q}\right)^p\lambda_q(\Omega)\,\lambda_q(\Omega)^{(p-q)/q} \\ 
\le\lambda_q(\Omega)\,\frac{\pi_p^p}{\pi_q^q}\left(\frac{h(\Omega)}2\right)^{p-q}
\le\lambda_q(\Omega).
\end{multline*}
Finally, since $h(\Omega)\rho(\Omega)\le N$, the condition on $\rho(\Omega)$ implies \eqref{eq:hcond}.
\end{proof}

\subsection*{Maximizing sequences}

\begin{corollary}\label{cor:plane}
Let $1<q<p<\infty$ and let $\tilde \Omega_j\subset\R^2$ be nonempty bounded convex open sets with $|\tilde \Omega_j|=1$. Then
\[
\F_{p,q}(\tilde \Omega_j)\longrightarrow\frac{\pi_p}{\pi_q}
\quad\Longleftrightarrow\quad\diam(\tilde \Omega_j)\longrightarrow\infty.
\]
\end{corollary}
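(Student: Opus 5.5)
We prove the two implications separately, using a compactness argument in each.

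\emph{($\Leftarrow$) Diverging diameters force maximality.} Suppose $\diam(\tilde\Omega_j)\to\infty$ with $|\tilde\Omega_j|=1$. By John's ellipsoid theorem (or simply by convexity in the plane), the minimal width $w_j$ of $\tilde\Omega_j$ satisfies $w_j\diam(\tilde\Omega_j)\le C|\tilde\Omega_j|$, so $w_j\to0$. Planar convex sets satisfy $\rho\ge w/3$ and $\rho\le w/2$. Hence $\rho_j\to0$ while $|\tilde\Omega_j|$ stays fixed.

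The decisive quantity is $h\rho$. We will show that $h(\tilde\Omega_j)\rho(\tilde\Omega_j)\to1$; the two-sided bound
\[
\frac{\pi_p}{\pi_q\,h\rho}<\F_{p,q}<\frac{\pi_p}{\pi_q}
\]
from the ``Other comparisons'' subsection then gives $\F_{p,q}(\tilde\Omega_j)\to\pi_p/\pi_q$. For planar convex sets the upper bound is classical: $h\le P/|\Omega|+$ lower-order terms. More concretely, testing $h$ with $E=\Omega$ gives $h\le P(\Omega)/|\Omega|$. For thin planar convex sets we have $P\approx2\diam$, and $|\Omega|\ge c\,\rho\diam$, but the constant is not $1$ (think of triangles). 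So I would instead rely on the known sharp inequality for planar convex sets, $h\le \frac1\rho+\sqrt{\pi/|\Omega|}$, which follows from the Kawohl--Lachand-Robert formula $h=1/r$, where $|\Omega_{-r}|=\pi r^2$. Indeed, $|\Omega_{-r}|\ge|\Omega|-rP$ yields $h\le P/|\Omega|\cdot(\dots)$; more directly, $h\rho\le 1+\rho\sqrt{\pi/|\Omega|}$. With $|\Omega|=1$ and $\rho_j\to0$ this gives $h\rho\to1$. Obtaining this clean planar estimate is the main obstacle. If the Kawohl--Lachand-Robert route is not available, I would try the bound $|\Omega_{-t}|\ge(1-t/\rho)^2|\Omega|$ (inner parallel sets contain homothetic copies), choosing $t$ so that $(1-t/\rho)^2=\pi t^2$.

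\emph{($\Rightarrow$) Maximality forces diverging diameters.} Suppose instead that $\diam(\tilde\Omega_j)$ does not diverge. Along a subsequence the diameters stay bounded, and since $|\tilde\Omega_j|=1$ the inradii are bounded below. After translation, Blaschke's selection theorem gives Hausdorff convergence to a convex body $\Omega$ with $|\Omega|=1$. Dirichlet $r$-eigenvalues are continuous under Hausdorff convergence of convex sets with nonempty interior, which is standard via the $\gamma$-convergence argument of Lemma \ref{lem:concavity} and domain monotonicity with homothetic sandwiches. Therefore $\F_{p,q}(\tilde\Omega_j)\to\F_{p,q}(\Omega)<\pi_p/\pi_q$ by Theorem \ref{thm:main}, a contradiction. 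Since every subsequence with bounded diameters leads to this contradiction, $\diam(\tilde\Omega_j)\to\infty$.
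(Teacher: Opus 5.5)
Your proof is correct. The implication ($\Rightarrow$) is the paper's argument (Blaschke selection, Hausdorff continuity of $\lambda_r$, strictness in Theorem~\ref{thm:main}). For ($\Leftarrow$) you take a genuinely different route.

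The paper gets ($\Leftarrow$) in one line from \cite[Proposition 3.5]{BBP23}. Along unit-volume convex sets with diverging diameter, the limit points of $\F_{p,q}$ lie between the infimum and supremum over $(N-1)$-dimensional convex sets. For $N=2$ these are intervals, where $\F_{p,q}\equiv\pi_p/\pi_q$. You instead squeeze $\F_{p,q}$ between $\pi_p/(\pi_q h\rho)$ and $\pi_p/\pi_q$, and then prove $\limsup h\rho\le1$.

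One step needs fixing. Your sentence ``Indeed, $|\Omega_{-r}|\ge|\Omega|-rP$ yields \dots'' gives only the trivial $h\le P/|\Omega|$. The argument that works is your ``fallback'', and it should be the main line:
\begin{itemize}
\item For $t_0=(1/\rho+\sqrt{\pi/|\Omega|})^{-1}$ you have $|\Omega_{-t_0}|\ge(1-t_0/\rho)^2|\Omega|=\pi t_0^2$.
\item Since $t\mapsto|\Omega_{-t}|-\pi t^2$ is decreasing, the Kawohl--Lachand-Robert radius is at least $t_0$, i.e.\ $h\rho\le1+\rho\sqrt{\pi/|\Omega|}$.
\item Alternatively, by the planar Steiner formula, $\Omega_{-t_0}+t_0B_1$ is a Cheeger competitor with perimeter-to-area ratio at most $1/t_0$.
\end{itemize}

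Combined with $|\Omega|\ge\rho\diam(\Omega)$, this gives for unit area the explicit bound $\F_{p,q}(\Omega)>\frac{\pi_p}{\pi_q}\bigl(1+\sqrt\pi/\diam(\Omega)\bigr)^{-1}$. This is a quantitative version of ($\Leftarrow$) that the citation does not provide.

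Your route also isolates a dimension-free sufficient condition for maximality, namely $h\rho\to1$. Only the step $\diam\to\infty\Rightarrow h\rho\to1$ is planar, and it has to be: along the cylinders of Remark~\ref{rem:cylinders}, $h\rho\to h(\omega)\rho(\omega)>1$. The paper's route, by contrast, needs neither the endpoint inequalities nor planar Cheeger-set geometry.
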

\begin{proof}
If the diameters diverge, the result follows from \cite[Proposition 3.5]{BBP23}, since $\F_{p,q}$ is constant on intervals.

Conversely, suppose by contradiction that the sequence is maximizing and has a subsequence of bounded diameter. By translation invariance and Blaschke's selection theorem \cite{Sch14}, its closures converge in Hausdorff distance, along a further subsequence, to a compact convex set $C$. Since the volume is continuous with respect to the Hausdorff distance on convex bodies, $|C|=1$, so that $C$ has nonempty interior $\Omega$.
By the continuity of $\lambda_r$ with respect to Hausdorff convergence of convex sets, as in the proof of \cite[Theorem 3.6]{BBP23}, $\F_{p,q}(\Omega)=\pi_p/\pi_q$, contradicting Theorem \ref{thm:main}.
\end{proof}
For $p=2$ and $q=1$, the implication ``$\Leftarrow$'' was proved in \cite[Proposition 4.1]{Par17}.

\begin{remark}\label{rem:cylinders}
For $N\ge3$ the analogue of the implication ``$\Leftarrow$'' of Corollary \ref{cor:plane} fails: divergence of the diameter is not sufficient for a sequence of unit-volume convex sets to be maximizing. Let $\omega\subset\R^{N-1}$ be a bounded convex open set, let $C_\ell=\omega\times(0,\ell)$, and let $\theta_\ell=|C_\ell|^{-1/N}=(|\omega|\,\ell)^{-1/N}$, where $|\omega|$ is the $(N-1)$-dimensional measure of $\omega$. The set
\[
D_\ell=\theta_\ell\,C_\ell=(\theta_\ell\,\omega)\times(0,\theta_\ell\,\ell)
\]
has unit volume. As $\ell\to\infty$, $\lambda_r(C_\ell)\to\lambda_r(\omega)$ for $r=p,q$ by \cite[Lemma 2.4]{BBP23}. Since $\F_{p,q}$ is scale invariant, we obtain
\[
\F_{p,q}(D_\ell)=\F_{p,q}(C_\ell)\longrightarrow\F_{p,q}(\omega)<\frac{\pi_p}{\pi_q},
\]
where the strict inequality is Theorem \ref{thm:main} in dimension $N-1\ge2$. Moreover, $D_\ell$ is a thin cylinder: its base $\theta_\ell\,\omega$ has diameter of order $\ell^{-1/N}$, while its height $\theta_\ell\,\ell=\ell^{1-1/N}|\omega|^{-1/N}$ tends to infinity. Hence $\diam(D_\ell)\ge\theta_\ell\,\ell\to\infty$, so that  $(D_\ell)$ consists of unit-volume convex sets with diverging diameters, but it is not a maximizing sequence. For $p=2$ and $q=1$, with $\omega$ a ball, this example is in \cite[Example 3.12]{CLS26}.
\end{remark}

\section{Further questions}
\label{sec:further}
The remainder in \eqref{eq:deficit} gives a possible starting point for a quantitative comparison. It remains to relate its integral to geometric quantities measuring degeneration towards a slab; the present proof does not provide such an estimate.
\smallskip

The characterization of maximizing sequences in higher dimension remains open. For $p=2$ and $q=1$, Cont, Leonardi and Saracco proved that vanishing relative width, that is, vanishing ratio between the first two principal widths, is a sufficient condition in $\R^3$ and for rhomboid-like sets, and conjectured that it is also necessary \cite{CLS26}. Their proof relies on the fact that, in these two settings, if $(\Omega_n)$ is a sequence of convex bodies with vanishing relative width, then, up to isometries, $\Omega_n$ contains a box $(0,a_n)\times(0,b_n)^{N-1}$ with $a_n/w_n\to1$ and $b_n/w_n\to\infty$, where $w_n$ is the minimal width of $\Omega_n$ \cite[Theorems 4.9 and 5.5]{CLS26}; combined with the arguments of Proposition \ref{prop:slabs}, it shows that the same sufficient condition holds for every $\F_{p,q}$, $1\le q<p\le\infty$. It would be interesting to know whether, in general, the maximizing sequences are the same for all the functionals $\F_{p,q}$.

   Finally, an extension beyond convex domains would require level sets of the first $p$-eigenfunction with nonnegative mean curvature and no critical points below the maximum level. It is not known to us whether nonnegative mean curvature of $\partial\Omega$ alone supplies either property.

 \subsection*{Acknowledgments}
The author is a member of the Gruppo Nazionale per l'Analisi Matematica, la Probabilit\`a e le loro Applicazioni (GNAMPA) of the Istituto Nazionale di Alta Matematica (INdAM).

\subsection*{AI Usage Declaration}
The author conceived and developed all the results and proofs in this paper. Claude (Anthropic) was used as an AI assistant to discuss proof strategies, assist in checking technical steps and help draft the text. The author retains full responsibility for the correctness and content of the final manuscript.


\end{document}